\newtheorem{theorem}{Theorem}
\newtheorem{lemma}{Lemma}
\newtheorem{corollary}{Corollary}
\newtheorem{observation}{Observation}
\newtheorem{proposition}{Proposition}
\newtheorem{problem}{Problem}
\newcommand{\QED}{$\Box$}
\let\oldenumerate\enumerate
\renewcommand{\enumerate}{
  \oldenumerate
  \setlength{\itemsep}{1pt}
  \setlength{\parskip}{0pt}
  \setlength{\parsep}{0pt}
}
\def\vertex(#1){\put(#1){\circle*{2}}}
\def\vertexo(#1){\put(#1){\circle{2}}}
\def\vert(#1){\put(#1){\circle*{1.5}}}
\def\verto(#1){\put(#1){\circle{1.5}}}
\def\lab(#1)#2{\put(#1){\makebox(0,0)[c]{#2}}}
\begin{document}

\title{The minmin coalition number in graphs}

\author{$^1$Davood Bakhshesh and $^2$Michael A. Henning\thanks{Research supported in part by the University of Johannesburg and the South African National Research Foundation} \\ \\
$^1$Department of Computer Science\\
University of Bojnord \\
Bojnord, Iran \\
\small \tt Email: d.bakhshesh@ub.ac.ir \\
\\
$^2$Department of Mathematics and Applied Mathematics \\
University of Johannesburg \\
Auckland Park, 2006 South Africa\\
\small \tt Email: mahenning@uj.ac.za}

\date{}
\maketitle

\begin{abstract}
A set $S$ of vertices in a graph $G$ is a dominating set if every vertex of $V(G) \setminus S$ is adjacent to a vertex in $S$. A coalition in $G$ consists of two disjoint sets of vertices $X$ and $Y$ of $G$, neither of which is a dominating set but whose union $X \cup Y$ is a dominating set of $G$. Such sets $X$ and $Y$ form a coalition in $G$. A coalition partition, abbreviated $c$-partition, in $G$ is a partition $\mathcal{X} = \{X_1,\ldots,X_k\}$ of the vertex set $V(G)$ of $G$ such that for all $i \in [k]$, each set $X_i \in \mathcal{X}$ satisfies one of the following two conditions: (1) $X_i$ is a dominating set of $G$ with a single vertex, or (2) $X_i$ forms a coalition with some other set $X_j \in \mathcal{X}$.
Let ${\cal A} = \{A_1,\ldots,A_r\}$ and ${\cal B}= \{B_1,\ldots, B_s\}$ be two partitions of $V(G)$. Partition ${\cal B}$ is a refinement of partition ${\cal A}$ if every set $B_i \in {\cal B} $ is either equal to, or a proper subset of, some set $A_j \in {\cal A}$. Further if ${\cal A} \ne {\cal B}$, then ${\cal B}$ is a proper refinement of ${\cal A}$.
%
%
Partition ${\cal A}$ is a minimal $c$-partition if it is not a proper refinement of another $c$-partition. Haynes et al. [AKCE Int. J. Graphs Combin. 17 (2020), no. 2, 653--659] defined the minmin coalition number $c_{\min}(G)$ of $G$ to equal the minimum order of a minimal $c$-partition of $G$. We show that $2 \le c_{\min}(G) \le n$, and we characterize graphs $G$ of order $n$ satisfying $c_{\min}(G) = n$. A polynomial-time algorithm is given to determine if $c_{\min}(G)=2$ for a given graph $G$. A necessary and sufficient condition for a graph $G$ to satisfy $c_{\min}(G) \ge 3$ is given, and a characterization of graphs $G$ with minimum degree~$2$ and $c_{\min}(G)= 4$ is provided.
\end{abstract}

\indent
{\small \textbf{Keywords:}  Coalition number; Domination number; Coalition partition} \\
\indent {\small \textbf{AMS subject classification:} 05C69}

\section{Introduction}

A set $S$ of vertices in a graph $G$ is a \emph{dominating set} if every vertex in $V(G) \setminus S$ is adjacent to a vertex in~$S$. The \emph{domination number} $\gamma(G)$ of $G$ is the minimum cardinality of a dominating set of $G$. If $A,B \subseteq S$, then  set $A$ \emph{dominates} the set $B$ if  every vertex $b \in B$ belongs to $A$ or is adjacent to a vertex of $A$. The study of domination in graphs is an active area of research in graph theory. A thorough treatment of this topic can be found in recent so-called ``domination books''~\cite{HaHeHe-20,HaHeHe-21,HaHeHe-23,HeYe-book}.

For graph theory notation and terminology, we generally follow~\cite{HaHeHe-23}.  Specifically, let $G$ be a graph with vertex set $V(G)$ and edge set $E(G)$, and of order $n(G) = |V(G)|$ and size $m(G) = |E(G)|$. Two adjacent vertices in $G$ are \emph{neighbors}. The \emph{open neighborhood} of a vertex $v$ in $G$ is $N_G(v) = \{u \in V \, \colon \, uv \in E\}$ and the \emph{closed neighborhood of $v$} is $N_G[v] = \{v\} \cup N_G(v)$. We denote the degree of $v$ in $G$ by $\deg_G(v)$, and so $\deg_G(v) = |N_G(v)|$. The minimum and maximum degrees in $G$ are denoted by $\delta(G)$ and $\Delta(G)$, respectively. An \emph{isolated vertex} in $G$ is a vertex of degree~$0$ in $G$. A graph is \emph{isolate}-\emph{free} if it contains no isolated vertex. A vertex $v$ is a \emph{universal vertex}, also called a \emph{full vertex} in the literature, if $N_G[v] = V(G)$, that is, $\deg_G(v) = n(G) - 1$. If the graph $G$ is clear from the context, we simply write $V$, $E$, $n$, $m$, $\deg(v)$, $N(v)$, and $N[v]$ rather than $V(G)$, $E(G)$, $n(G)$, $m(G)$, $\deg_G(v)$, $N_G(v)$, and $N_G[v]$, respectively.

We denote a \emph{path} and \emph{cycle} on $n$ vertices by $P_n$ and $C_n$, respectively, and we denote a \emph{complete graph} on $n$ vertices by $K_n$. A \emph{complete bipartite graph} with partite sets of cardinalities $r$ and $s$ we denote by $K_{r,s}$. A \emph{star} is a complete bipartite graph $K_{1,s}$ where $s \ge 2$. A \emph{nontrivial tree} is a tree of order at least~$2$.  A partition of a set is a grouping of its elements into non-empty subsets, in such a way that every element of the set is included in exactly one subset.

A \emph{coalition} in a graph $G$ consists of two disjoint sets of vertices $X$ and $Y$ of $G$, neither of which is a dominating set but whose union $X \cup Y$ is a dominating set of $G$. Such sets $X$ and $Y$ \emph{form a coalition} in $G$. A \emph{coalition partition}, called a $c$-\emph{partition}, in $G$ is a partition $\Psi = \{V_1,\ldots,V_k\}$ of $V(G)$ such that for all $i \in [k]$, the set $V_i$ is either a singleton dominating set or forms a coalition with another set $V_j$ for some $j$, where $j \in [k] \setminus \{i\}$. The \emph{coalition number}, $C(G)$, in $G$ equals the maximum order $k$ of a $c$-partition of $G$. Coalitions in graphs were introduced and first studied by Haynes, Hedetniemi, Hedetniemi, McRae, and Mohan~\cite{coal0}, and have subsequently been studied, for example, in~\cite{coal1,coal2,coal3}. Their research primarily focused on examining coalition numbers in trees and cycles. In~\cite{coal2}, they established upper bounds on the coalition number of a graph in terms of its minimum and maximum degree.  Bakhshesh et al. in~\cite{bakhcoal}  characterized graphs $G$ of order $n$ with $\delta(G) \ge 1$ and $C(G) = n$. They also identified all trees $T$ of order~$n$ with $C(T) = n-1$.

In \cite{coal0}, Haynes et al. introduced the  \emph{refinement} of a coalition partition and defined a \emph{minimal coalition partition}. Let ${\cal A} = \{A_1,\ldots,A_r\}$ and ${\cal B}= \{B_1,\ldots, B_s\}$ be two partitions of $V(G)$. Partition ${\cal B}$ is a \emph{refinement} of partition ${\cal A}$, denoted ${\cal A}\le {\cal B}$, if every set $B_i \in {\cal B} $ is either equal to, or a proper subset of, some set $A_j \in {\cal A}$. Further if ${\cal A} \ne {\cal B}$, then ${\cal B}$ is a \emph{proper refinement} of ${\cal A}$, denoted ${\cal A}<{\cal B}$.
%
%
The following observation follows from the definition of a proper refinement of a coalition partition.

\begin{observation}{\rm (\cite{coal0})}
\label{obs0}
If $\Psi = \{V_1,\ldots,V_k\}$ is a $c$-partition of a graph $G$, and there exist two sets $V_i$ and $V_j$ whose union $V_i \cup V_j$ is not a dominating set, then the partition $\Psi'$ formed from $\Psi$ by replacing $V_i$ and $V_j$ with the union $V_i \cup V_j$ is a $c$-partition of $G$ and $\Psi$ is a proper refinement of $\Psi'$.
\end{observation}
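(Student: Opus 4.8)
The plan is to verify the two assertions separately. Write $\Psi' = (\Psi \setminus \{V_i,V_j\}) \cup \{W\}$ with $W = V_i \cup V_j$; this is evidently still a partition of $V(G)$, since we have merely merged two blocks into one. For the refinement claim, note that every block of $\Psi$ other than $V_i$ and $V_j$ occurs unchanged in $\Psi'$, while $V_i$ and $V_j$ are each proper subsets of $W \in \Psi'$ (proper because the complementary set is nonempty). Hence $\Psi$ is a refinement of $\Psi'$, and since $V_i \in \Psi \setminus \Psi'$ the two partitions are distinct, so $\Psi$ is in fact a proper refinement of $\Psi'$.

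The real content is showing that $\Psi'$ is a $c$-partition, i.e.\ that each of its blocks is a singleton dominating set or forms a coalition with another of its blocks. I would first extract the key consequence of the hypothesis that $W$ is not a dominating set: since any superset of a dominating set is dominating, neither $V_i$ nor $V_j$ is dominating, and moreover $V_i$ and $V_j$ do not form a coalition with each other (their union being non-dominating). Consequently, in $\Psi$ the block $V_i$ is not a singleton dominating set, so it forms a coalition with some block $V_\ell$, and $\ell \notin \{i,j\}$; symmetrically $V_j$ has a coalition partner outside $\{i,j\}$. Thus both of these partners survive as blocks of $\Psi'$.

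Now let $Z$ be an arbitrary block of $\Psi'$. If $Z = V_m$ with $m \notin \{i,j\}$, then in $\Psi$ either $V_m$ is a singleton dominating set, in which case it still is in $\Psi'$; or $V_m$ forms a coalition with some $V_p$. If $p \notin \{i,j\}$, then $V_p \in \Psi'$ and the same pair is a coalition in $\Psi'$. If $p \in \{i,j\}$, say $p=i$, then $V_m \cup W \supseteq V_m \cup V_i$ is dominating while $V_m$ and $W$ are both non-dominating and disjoint, so $V_m$ and $W$ form a coalition in $\Psi'$. Finally, if $Z = W$, take the partner $V_\ell$ of $V_i$ from the previous paragraph: $V_\ell \in \Psi'$, it is disjoint from $W$, neither $V_\ell$ nor $W$ dominates, yet $W \cup V_\ell \supseteq V_i \cup V_\ell$ is dominating, so $W$ and $V_\ell$ form a coalition. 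This covers every block, so $\Psi'$ is a $c$-partition.

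I do not expect a genuine obstacle; the only delicate point is the observation that, because $W = V_i \cup V_j$ fails to dominate, the coalition partners of $V_i$ and of $V_j$ in $\Psi$ must lie outside $\{V_i,V_j\}$ and hence persist in $\Psi'$ — this is precisely where the hypothesis is used — together with the routine fact that enlarging one side of a coalition keeps its union dominating while the relevant sets stay non-dominating and disjoint, so no coalition involving a third block is destroyed by the merge.
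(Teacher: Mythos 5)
Your proof is correct and complete. The paper itself gives no argument for this observation --- it is quoted from the cited reference and dismissed as following ``from the definition,'' so there is no in-paper proof to compare against. You correctly isolate the one genuinely non-routine point: because $V_i \cup V_j$ is not a dominating set, neither $V_i$ nor $V_j$ can be a singleton dominating set or form a coalition with the other, so each must have a coalition partner among the blocks that survive the merge; and enlarging one side of a coalition to $W = V_i \cup V_j$ preserves the coalition since the union only grows (hence stays dominating) while $W$ itself remains non-dominating by hypothesis. The case analysis over the blocks of $\Psi'$ is exhaustive and each case is handled correctly, so the write-up can stand as a full proof of the observation.
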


A $c$-partition ${\cal A}$ is a \emph{minimal} $c$-\emph{partition} in $G$ if it is not a proper refinement of any other $c$-partition in $G$. In \cite{coal0}, Haynes et al. defined the \emph{minmin coalition number} $c_{\min}(G)$ of $G$ to equal the minimum order of a minimal $c$-partition of $G$.  A minimal $c$-partition of $G$ of cardinality~$c_{\min}(G)$ is called a $c_{\min}$-\emph{partition of $G$}. Haynes et al.~\cite{coal0} posed the following open problem.

\begin{problem}{\rm (\cite{coal0})}
\label{prob1}
{\rm What can you say about $c_{\min}(G)$?}
\end{problem}

This paper addresses Problem \ref{prob1}.  We proceed as follows. In Section~\ref{S:bounds}, we present lower and upper bounds on the minmin coalition number and we prove that if $G$ is a graph of order~$n$, then $2 \le c_{\min}(G) \le n$, and these bounds are sharp. In Section~\ref{S:charn}, we characterize graphs $G$ of order $n$ satisfying $c_{\min}(G) = n$. In Section~\ref{S:le4}, we give a comprehensive description of graphs $G$ satisfying $c_{\min}(G)=k$ where $2 \le k \le 4$. Additionally, we present a polynomial-time algorithm to determine if $c_{\min}(G)=2$ for a given graph $G$. If $G$ is an isolate-free graph that does not contain a universal vertex and has minimum degree~$1$, then we show that $c_{\min}(G)=2$. A characterization of graphs $G$ with minimum degree~$2$ and $c_{\min}(G) = 4$ is provided.

\section{Bounds on the minmin coalition number}
\label{S:bounds}

In this section, we present lower and upper bounds on the minmin coalition number. We prove firstly that the minmin coalition number is bounded above by the cardinality of an arbitrary $c$-partition of $G$.

\begin{proposition}
\label{prop:upperb}
If $G$ is a graph and ${\cal X}$ is a $c$-partition of $G$, then $c_{\min}(G) \le |{\cal X}|$.
\end{proposition}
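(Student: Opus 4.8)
The plan is to reduce an arbitrary $c$-partition ${\cal X}$ to a minimal $c$-partition by repeatedly merging pairs of sets, using Observation~\ref{obs0}. Concretely, if ${\cal X}$ is already minimal, then by definition $c_{\min}(G) \le |{\cal X}|$ and we are done. Otherwise ${\cal X}$ is a proper refinement of some $c$-partition ${\cal X}'$ with $|{\cal X}'| < |{\cal X}|$; equivalently, by Observation~\ref{obs0}, there exist two sets $V_i, V_j \in {\cal X}$ whose union is not a dominating set, and replacing $\{V_i,V_j\}$ by $\{V_i \cup V_j\}$ yields a $c$-partition of strictly smaller order.

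The key step is then an induction (or, equivalently, a descent argument) on the order of the $c$-partition. Since $|{\cal X}|$ is a positive integer and each merging operation strictly decreases it, after finitely many steps we reach a $c$-partition ${\cal X}^\ast$ that admits no further merging, i.e.\ no two of its parts have a non-dominating union. Such a ${\cal X}^\ast$ is a minimal $c$-partition: if it were a proper refinement of some $c$-partition ${\cal Y}$, then some part of ${\cal Y}$ would be the union of at least two parts of ${\cal X}^\ast$, and in particular two parts of ${\cal X}^\ast$ would have a union contained in a part of ${\cal Y}$ — but we must check this union is non-dominating. This follows because a proper subset of a part of a $c$-partition is never a dominating set (a part of a $c$-partition is itself never a dominating set unless it is a singleton dominating set, and in the latter case it cannot be properly refined), so any part of ${\cal Y}$ properly containing a part of ${\cal X}^\ast$ is non-dominating, contradicting the non-mergeability of ${\cal X}^\ast$. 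Hence ${\cal X}^\ast$ is minimal, and $c_{\min}(G) \le |{\cal X}^\ast| \le |{\cal X}|$.

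The only real subtlety — and the step I would be most careful about — is the characterization of minimality in terms of "no mergeable pair," namely showing that the process of exhaustively merging non-dominating pairs terminates exactly at a minimal $c$-partition. The forward direction (a minimal $c$-partition has no mergeable pair) is immediate from Observation~\ref{obs0}: a mergeable pair would exhibit it as a proper refinement. The reverse direction requires the observation above that refinement of a $c$-partition can only split dominating-free parts further, keeping them non-dominating. Once this equivalence is in hand, the proposition is just the statement that the merging process starts at ${\cal X}$, decreases the order at each step, and halts; the halting point is a minimal $c$-partition whose order is at most $|{\cal X}|$, which is exactly $c_{\min}(G) \le |{\cal X}|$ by definition of $c_{\min}(G)$ as a minimum over all minimal $c$-partitions.
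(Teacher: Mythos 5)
Your proof is correct and follows essentially the same route as the paper: repeatedly merge pairs of parts with non-dominating union via Observation~\ref{obs0} until a minimal $c$-partition of order at most $|{\cal X}|$ is reached. In fact you are more careful than the paper, which silently assumes that a $c$-partition admitting no mergeable pair is minimal; your justification of that step (a non-singleton part of any $c$-partition is non-dominating, so its subsets are too) is exactly the right argument.
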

\begin{proof}
Let ${\cal X}$ be an arbitrary $c$-partition of $G$. If ${\cal X}$ is a minimal $c$-partition of $G$, then $c_{\min}(G) \le |{\cal X}|$.  Now, assume that  ${\cal X}$ is not a minimal $c$-partition. By repeated applications of Observation~\ref{obs0}, there exists a minimal $c$-partition ${\cal P}$ of $G$ with ${\cal P} < {\cal X}$. Hence, $c_{\min}(G) \le |{\cal P}| < |{\cal X}|$.~\QED
\end{proof}

As an immediate consequence of Proposition~\ref{prop:upperb}, we have the following upper bound on the minmin coalition number.

\begin{corollary}
\label{cor:upperb}
If $G$ is a graph, then $c_{\min}(G) \le C(G)$.
\end{corollary}

We determine next a relationship between the minmin coalition number of a graph and the graph obtained from it by removing the universal vertices.

\begin{proposition}
\label{propGprim}
If $G$ is a graph that is not a complete graph and contains $k$ universal vertices, then $c_{\min}(G) = c_{\min}(G') + k$, where $G'$ is obtained from $G$ by removing the universal vertices.
\end{proposition}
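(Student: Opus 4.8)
The plan is to establish the two inequalities $c_{\min}(G) \le c_{\min}(G') + k$ and $c_{\min}(G) \ge c_{\min}(G') + k$ separately, using the key structural fact that any universal vertex $u$ of $G$, when placed in a singleton block $\{u\}$, forms a legal block of a $c$-partition automatically (either $\{u\}$ is a singleton dominating set, which happens when $G$ has no other vertex outside $N[u]$, impossible here since $G$ is not complete; or, if $\{u\}$ is not dominating, $\{u\}$ together with any singleton $\{v\}$, $v \neq u$, has union $\{u,v\}$ which dominates $G$ because $u$ is universal, and neither singleton dominates $G$ since $G$ is not complete and has at least two non-adjacent vertices). The crucial point is that a universal vertex can always be ``split off'' into its own block, and it can always find a coalition partner, so refinements involving universal vertices are harmless.

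For the upper bound, I would start with a $c_{\min}$-partition $\mathcal{P}'$ of $G'$ and extend it to a $c$-partition of $G$ by adding the $k$ singletons consisting of the universal vertices; I must check this is still a $c$-partition (blocks of $\mathcal{P}'$ that formed coalitions in $G'$ still do so in $G$ since adding universal vertices only makes domination easier, and each new singleton $\{u_i\}$ coalitions with any other singleton as argued above — here I would want to note $k \geq 1$ so there is always a partner, or if $k=1$, partner with some block of $\mathcal P'$). Then I would argue this extended partition is in fact \emph{minimal}: any proper refinement would have to refine it either within $V(G')$ or within the set of universal vertices, but the universal-vertex singletons are already singletons and cannot be refined, while a proper refinement inside $V(G')$ would, after deleting the (fixed) universal-vertex blocks, induce a proper refinement of $\mathcal{P}'$ that is still a $c$-partition of $G'$ — contradicting minimality of $\mathcal{P}'$. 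This gives $c_{\min}(G) \le |\mathcal{P}'| + k = c_{\min}(G') + k$.

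For the lower bound, I would take a $c_{\min}$-partition $\mathcal{P}$ of $G$ and show that every universal vertex of $G$ lies in a singleton block of $\mathcal{P}$. Indeed, if some block $B \in \mathcal P$ contains a universal vertex $u$ together with at least one other vertex, then because $u$ is universal the block $B$ is already a dominating set, hence $B$ cannot form a coalition with any other block and $B$ is not a singleton dominating set, so $B$ violates the $c$-partition conditions — unless I instead argue via minimality that we could split $B$ into $\{u\}$ and $B \setminus \{u\}$ and still have a $c$-partition, contradicting minimality (this requires checking $B\setminus\{u\}$ and $\{u\}$ each land in legal configurations, which follows from $u$ being universal). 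Once all $k$ universal vertices are isolated into singletons, deleting those $k$ blocks leaves a partition of $V(G')$; I would verify it is a $c$-partition of $G'$ (coalitions among the remaining blocks cannot have used the universal vertices, since those sit in their own blocks, so domination of $G$ by $B_i \cup B_j$ restricts to domination of $G'$) and that it is minimal in $G'$ (a proper refinement of it would give, by re-attaching the universal singletons, a proper refinement of $\mathcal{P}$ that is still a $c$-partition of $G$). Hence $c_{\min}(G') \le |\mathcal{P}| - k = c_{\min}(G) - k$.

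The main obstacle I anticipate is the careful bookkeeping in the minimality transfer: showing that minimality of a $c$-partition of $G$ forces universal vertices into singleton blocks, and conversely that minimality is preserved in both directions under adding/removing the universal-vertex singletons. The subtlety is that ``$c$-partition'' status and ``minimality'' must both be tracked, and one must rule out degenerate cases — in particular when $G'$ is complete or has very few vertices, or when $k=1$ so a universal singleton has no other universal singleton to pair with; in the latter case one uses that any singleton $\{v\}$ with $v \in V(G')$ serves as a coalition partner for $\{u\}$, which is where the hypothesis that $G$ is not complete (so $G'$ is nonempty and $\{u\}$ alone is not dominating) is essential.
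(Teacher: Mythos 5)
Your construction is the same as the paper's: pass between a $c_{\min}$-partition of $G$ and one of $G'$ by adding or deleting the $k$ universal-vertex singletons. However, two points in your justification are wrong as stated.

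First, your opening analysis of a universal singleton $\{u\}$ rests on a false premise. If $u$ is universal then $N[u]=V(G)$, so $\{u\}$ \emph{is} a dominating set of $G$ --- always, whether or not $G$ is complete. Hence $\{u\}$ is a legal block simply because it is a singleton dominating set; it neither needs nor can have a coalition partner (a coalition requires both sets to be non-dominating), so your worry about finding a partner for $\{u\}$ when $k=1$ is moot. The same observation is what you actually need in the lower-bound step: since $\{u\}$ is a dominating set of $G$, it is never the coalition partner of any other block, so deleting the universal singletons from a $c$-partition of $G$ cannot deprive a remaining block of its partner; combined with the fact that a nonempty $S\subseteq V(G')$ dominates $G$ if and only if it dominates $G'$, this is why the restricted partition is a $c$-partition of $G'$. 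Your stated reason (``coalitions among the remaining blocks cannot have used the universal vertices, since those sit in their own blocks'') does not address the case where a block's only partner \emph{was} a universal singleton.

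Second, your minimality-transfer arguments run in the wrong direction. In this paper ``$\mathcal{B}$ is a refinement of $\mathcal{A}$'' means $\mathcal{B}$ is \emph{finer}, and a $c$-partition is \emph{minimal} if it is not a proper refinement of another $c$-partition, i.e.\ if no \emph{coarsening} (merging of blocks) of it is a $c$-partition. You instead argue that the constructed partitions admit no finer $c$-partition, which is irrelevant to minimality, and you invoke ``minimality of $\mathcal{P}'$'' to rule out refinements of $\mathcal{P}'$, which minimality does not do. Fortunately none of this is needed: by Proposition~\ref{prop:upperb}, $c_{\min}(G)\le|\mathcal{X}|$ for \emph{every} $c$-partition $\mathcal{X}$, minimal or not, so once the extended partition is verified to be a $c$-partition of $G$ and the restricted partition a $c$-partition of $G'$, both inequalities follow at once. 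That is exactly how the paper's proof sidesteps the minimality bookkeeping you identify as the ``main obstacle.''
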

\begin{proof}
Let $G$ be a graph that is not a complete graph and contains $k$ universal vertices, and let ${\cal X}$ be a $c_{\min}$-partition of $G$. Thus, ${\cal X}$ is a minimal $c$-partition of $G$ and $|{\cal X}| = c_{\min}(G)$. If $u$ is a universal vertex, then every $c$-partition of $G$ contains the set $\{u\}$. In particular, $\{u\} \in {\cal X}$. Let ${\cal X}'$ be obtained from ${\cal X}$ by removing all sets $\{u\}$, where $u$ is a universal vertex of $G$. The resulting partition ${\cal X}'$ is a $c$-partition of $G'$. Thus by Proposition~\ref{prop:upperb} we infer that $c_{\min}(G')\le |{\cal X}'| = |{\cal X}|-k = c_{\min}(G) - k$. To prove the reverse inequality, let ${\cal Y}'$ be a $c_{\min}$-partition of $G'$. Thus, ${\cal Y}'$ is a minimal $c$-partition of $G'$ and $|{\cal Y}'| = c_{\min}(G')$. Adding all sets $\{u\}$ to ${\cal Y}'$, where $u$ is a universal vertex of $G$, yields a partition ${\cal Y}$ that is a $c$-partition of $G$. By Proposition~\ref{prop:upperb} we infer that $c_{\min}(G) \le |{\cal Y}| = |{\cal Y}'|+k = c_{\min}(G') + k$. Consequently, $c_{\min}(G) = c_{\min}(G')+k$.~\QED
\end{proof}

\medskip
We next establish lower and upper bounds on the minmin coalition number of a graph.

\begin{theorem}
\label{thmf}
If $G$ is a graph of order $n \ge 2$, then $2 \le c_{\min}(G) \le n$, and these bounds are sharp.
\end{theorem}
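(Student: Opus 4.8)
The plan is to establish the two inequalities separately, and then exhibit graphs attaining each bound. For the upper bound $c_{\min}(G) \le n$, the key observation is that the partition of $V(G)$ into $n$ singleton sets is always a $c$-partition of $G$ when $G$ is isolate-free (each singleton either dominates $G$, in which case the vertex is universal, or forms a coalition with the singleton of one of its neighbors, since together with a neighbor's vertex... wait, that need not dominate). Let me reconsider: I would instead argue via Proposition \ref{prop:upperb}. It suffices to produce \emph{some} $c$-partition of $G$ of order at most $n$; but every $c$-partition has order at most $n$ trivially, so the real content is that a $c$-partition exists at all. I would show the all-singletons partition works when $G$ is isolate-free by a direct check, and handle graphs with isolated vertices — or simply note that if $G$ has an isolated vertex $v$, then $\{v\}$ can never be part of a coalition nor be a dominating set, so one must be slightly careful; I expect the theorem implicitly assumes $G$ isolate-free, or the bound is interpreted appropriately. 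Granting existence of a $c$-partition, Proposition \ref{prop:upperb} gives $c_{\min}(G) \le n$.

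For the lower bound $c_{\min}(G) \ge 2$, I would argue that no $c$-partition can have order $1$. A $c$-partition $\{V_1\}$ with a single block forces $V_1 = V(G)$, and the definition requires $V_1$ either to be a singleton dominating set (impossible once $n \ge 2$) or to form a coalition with another block (impossible, as there is no other block). Hence every $c$-partition has order at least $2$, so in particular every minimal $c$-partition does, giving $c_{\min}(G) \ge 2$. One should also confirm that at least one minimal $c$-partition exists, which follows from the existence of a $c$-partition together with Observation \ref{obs0} and finiteness of $V(G)$ (repeated merging terminates).

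For sharpness: the complete graph $K_n$ has every vertex universal, and I would check that its only $c$-partition is the all-singletons partition (any block of size $\ge 2$ is already dominating, violating the coalition condition since a coalition requires \emph{neither} set to dominate), which is therefore the unique — hence minimal — $c$-partition, giving $c_{\min}(K_n) = n$. For the lower bound, I would exhibit a family with $c_{\min}(G) = 2$; a natural candidate is a graph with two vertices that together dominate but neither dominates alone (e.g. $P_4$, or more simply $\overline{K_2}$ joined appropriately) — take $G$ with a $c$-partition $\{X, Y\}$ where $X \cup Y = V(G)$, neither dominating, and observe a two-block $c$-partition is automatically minimal since there is no smaller $c$-partition. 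The cycle $C_4$ or path $P_4$ works: partition into two non-dominating halves whose union dominates. The main obstacle is the mild subtlety in the upper-bound argument around isolated vertices and confirming a $c$-partition always exists; everything else is a short direct verification from the definitions.
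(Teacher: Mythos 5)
Your overall route is the same as the paper's: the lower bound comes from observing that no $c$-partition can consist of a single block, the upper bound comes from Proposition~\ref{prop:upperb} applied to any $c$-partition (the paper phrases this as $c_{\min}(G)\le C(G)\le n$), and sharpness is shown by explicit examples. Your observation that any $2$-block $c$-partition is automatically minimal (a proper refinement must have strictly more blocks, and no $1$-block $c$-partition exists for $n\ge 2$) is a clean way to finish the lower-bound sharpness, and your choice of $K_n$ for the upper-bound sharpness is valid and arguably simpler than the paper's complete multipartite example with parts of size~$2$. The existence of a $c$-partition, which you correctly identify as the only real content of the upper bound, is not proved in the paper either; it is inherited from the Haynes et al.\ reference via the assumption that $C(G)$ is defined, so you are not missing anything the paper supplies.

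One concrete error: $C_4$ does \emph{not} work as a lower-bound example. In $C_4$ every set of two or more vertices is a dominating set, so there is no partition of $V(C_4)$ into two non-dominating parts, and in fact $c_{\min}(C_4)=4$ (this is the paper's own Corollary~\ref{cor:cycle}). Since you also offer $P_4$, which does work via $\{\{v_1,v_2\},\{v_3,v_4\}\}$, the proof survives, but you should delete the $C_4$ claim. Your initial hesitation about the all-singletons partition was well founded --- it is generally not a $c$-partition --- and the fallback to Proposition~\ref{prop:upperb} is the right move.
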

\begin{proof}
Let $G$ be a graph of order $n \ge 2$. By Corollary~\ref{cor:upperb}, $c_{\min}(G) \le C(G)$. Since $C(G) \le n$, the upper bound $c_{\min}(G) \le n$ trivially holds. To prove the lower bound on $c_{\min}(G)$, let ${\cal Y}$ be a $c_{\min}$-partition of $G$. If ${\cal Y}$ contains a set of cardinality~$1$, then such a set is a singleton dominating set of $G$. In this case, we infer from the lower bound on the order of $G$ that $|{\cal Y}| \ge 2$. If every set in ${\cal Y}$ has cardinality at least~$2$, then every set in ${\cal Y}$ forms a coalition with some other set in the $c$-partition, implying once again that $|{\cal Y}| \ge 2$. Hence in both cases, $c_{\min}(G) = |{\cal Y}|  \ge 2$.

To prove the sharpness of lower bound, consider, for example, when $G$ is a path $v_1v_2 \ldots v_n$ of order~$n \ge 4$. The partition ${\cal X} = \{ \{v_1,v_2\}, \{v_3,\ldots,v_n\} \}$ is a $c$-partition of $G$ that is not a proper refinement of any other $c$-partition in $G$, implying that ${\cal X}$ is a minimal $c$-partition. Therefore, $2 \le c_{\min}(G) \le |{\cal X}| = 2$, and so $c_{\min}(G) = 2$. To prove the sharpness of upper  bound, consider, for example, a complete bipartite graph $G$ with partite sets $V_1, V_2, \ldots, V_k$ where $k \ge 2$ and $|V_i| = 2$ for all $i \in [k]$. Since $\gamma(G)=2$ and every subset of $V(G)$ of cardinality~$2$ is a dominating set of $G$, we infer that every $c$-partition of $G$ contains only singleton sets. Hence, $c_{\min}(G)=n$.~\QED
\end{proof}

\section{Graphs with large minmin coalition number}
\label{S:charn}

In this section, we characterize graphs $G$ of order $n$ satisfying $c_{\min}(G) = n$.  For this purpose, we define a family ${\cal M}$ of graphs $G$ that are generated in the following recursive manner. We begin by including the graphs $K_1$, $K_2$, and $\overline{K}_2$ in ${\cal M}$. If $H$ is a graph already present in ${\cal M}$, then we add the two graphs $K_1+H$ and $\overline{K}_2+H$ to ${\cal M}$, where the join $F + G$ of two graphs $F$ and $G$ is the graph formed from disjoint copies of $F$ and $H$ by joining every vertex of $F$ to every vertex of $G$. As an illustration, the $C_4 = \overline{K}_2 + \overline{K}_2$ belongs to the family~${\cal M}$, and so $C_4 \in {\cal M}$. The graph $\overline{K}_2 + C_4$ illustrated on the left hand drawing of Figure~\ref{fs4} (where here $H = C_4$) therefore belongs to the family~${\cal M}$. Moreover the graph $K_1 + C_4$ belongs to the family~${\cal M}$, implying that the graph $K_1 + H$ illustrated in the right hand drawing of Figure~\ref{fs4} (where here $H = K_1 + C_4$) therefore belongs to the family~${\cal M}$.

\begin{figure}[ht]
\begin{center}
\includegraphics[width=0.6\linewidth]{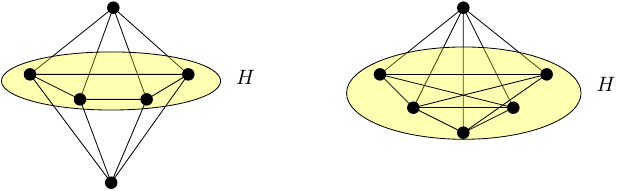}
\caption{Two graphs in the family ${\cal M}$.}
\label{fs4}
\end{center}
\end{figure}

\begin{proposition}
\label{prop:M}
If $G \in {\cal M}$ has order $n \ge 3$, then $G$ is a connected graph and $c_{\min}(G) = n$.
\end{proposition}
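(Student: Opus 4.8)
The plan is to establish the two assertions separately: connectedness of $G$, and the value $c_{\min}(G)=n$. Connectedness is immediate, since the seed graphs $K_1,K_2,\overline{K}_2$ all have order at most~$2$; hence any $G\in{\cal M}$ with $n\ge 3$ has the form $K_1+H$ or $\overline{K}_2+H$ for some (nonempty) $H\in{\cal M}$, and in either construction every vertex of $G$ is joined to every vertex of $H$, so $G$ is connected.

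The key step is the following claim, which I would prove by induction on the recursive construction of ${\cal M}$: \emph{if $G\in{\cal M}$, then every subset $S\subseteq V(G)$ with $|S|\ge 2$ is a dominating set of $G$.} It suffices to treat $|S|=2$, because a larger set contains a $2$-element subset and every superset of a dominating set is dominating. For the seeds $K_1$ (vacuous), $K_2$, and $\overline{K}_2$, the only $2$-subset is $V(G)$ itself, which dominates trivially. For the inductive step with $G=K_1+H$ and apex vertex $v$: if $v\in S$ then $S$ contains a universal vertex and dominates $G$; if $v\notin S$ then $S\subseteq V(H)$, so $S$ dominates $H$ by the induction hypothesis, and since $v$ is adjacent to all of $V(H)$ and $S\neq\emptyset$, the set $S$ also dominates $v$, hence dominates $G$. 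For $G=\overline{K}_2+H$ with apex vertices $x,y$, one argues in the same way, splitting according to whether $S$ contains two, one, or none of $x,y$; the only slightly fussy point is the case $|V(H)|=1$, where $S\subseteq V(H)$ is impossible and that subcase is vacuous.

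Granting the claim, the conclusion $c_{\min}(G)=n$ follows quickly. First, the singleton partition $\Psi_0=\{\{v\}\colon v\in V(G)\}$ is a $c$-partition: a universal vertex $v$ contributes a singleton dominating set $\{v\}$, and if $v$ is not universal then some other vertex $w$ is also not universal (otherwise every vertex distinct from $v$ would be universal, hence adjacent to $v$, forcing $v$ to be universal as well), and then $\{v\}$ and $\{w\}$ form a coalition because $\{v,w\}$ is dominating by the claim while neither $\{v\}$ nor $\{w\}$ is. Second, $\Psi_0$ is the \emph{only} $c$-partition of $G$, since in any $c$-partition a part of size at least~$2$ would have to form a coalition with another part and hence be a non-dominating set, contradicting the claim; thus every part must be a singleton. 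Being the unique $c$-partition, $\Psi_0$ is (vacuously) a minimal $c$-partition, and it has order $n$, whence $c_{\min}(G)=n$.

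The only real obstacle is bookkeeping in the inductive proof of the claim — in particular, correctly organizing in the $\overline{K}_2+H$ step the subcases according to how $S$ meets the two apex vertices, together with the degenerate small cases; everything after the claim is routine.
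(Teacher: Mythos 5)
Your proposal is correct and follows essentially the same route as the paper: an induction over the recursive construction of ${\cal M}$ showing that every two vertices dominate (the paper states this only for pairs of non-universal vertices, its property ($\star$), which is equivalent since a pair containing a universal vertex dominates trivially), followed by the observation that this forces every $c$-partition to consist of singletons. Your explicit verification that the all-singletons partition is indeed a $c$-partition is a small point the paper leaves implicit, but the argument is the same.
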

\begin{proof}
Let $G \in {\cal M}$ have order $n \ge 3$. By construction of graphs in the family ${\cal M}$, the graph $G$ is connected. Let $V=\{v_1,\ldots,v_n\}$. We proceed by induction on $n \ge 3$ to show that the property ($\star$) below holds in the graph $G$:
\begin{enumerate}
\item[($\star$)] Every two distinct vertices that are not universal vertices form a dominating set of $G$.
\end{enumerate}

Suppose that $n = 3$. In this case, either $G = K_1 + K_2 = K_3$ or $G = K_1 + \overline{K}_2 = P_3$. If $G = K_3$, then every vertex of $G$ is a universal vertex. If $G = P_3$, then $G$ has two vertices that are not universal and these two vertices form a dominating set of $G$. Hence if $n=3$, then property ($\star$) holds. Suppose that $n = 4$. Thus, $G = K_1 + H$ where $H \in \{P_3,K_3\}$ or $G = \overline{K}_2 + H$ where $H \in \{K_2,\overline{K}_2\}$. Hence, $G \in \{K_4,K_4 - e,C_4\}$ and property ($\star$) holds in the graph $G$. This establishes the base case when $n = 3$ and $n = 4$. Suppose that $n \ge 5$ and that if $G' \in {\cal M}$ has order~$n'$ where $3 \le n' < n$, then property ($\star$) holds in the graph $G'$. Let $G \in {\cal M}$ have order $n$. Since $G \in {\cal M}$, either $G = K_1 + G'$ or $G = \overline{K}_2 + G'$ for some graph $G' \in {\cal M}$.

Suppose firstly that $G = K_1 + G'$ for some graph $G' \in {\cal M}$ of order~$n'$. Necessarily, $n' = n-1$. Let $v$ be the vertex added to $G'$ to construct $G$, and so $v$ is a universal vertex of $G$. Every universal vertex in $G'$ is also a universal vertex in $G$. Let $x$ and $y$ be two arbitrary vertices of $G$ that are not universal vertices of $G$. Both $x$ and $y$ belong to $G'$ and neither $x$ nor $y$ is a universal vertices of $G'$. Applying the inductive hypothesis to $G'$, the set $\{x,y\}$ forms a dominating set of $G'$, and therefore also of $G$ since both $x$ and $y$ are adjacent to the universal vertex $v$ of $G$. Hence in this case when $G = K_1 + G'$ for some graph $G' \in {\cal M}$, property ($\star$) holds in the graph $G$.

Suppose secondly that $G = \overline{K}_2 + G'$ for some graph $G' \in {\cal M}$ of order~$n'$. Necessarily, $n' = n-2$. Let $v_1$ and $v_2$ be the two vertices added to $G'$ to construct $G$, and so $v_i$ is adjacent to every vertex of $G$ except for the vertex $v_{3-i}$ for $i \in [2]$. Every universal vertex in $G'$ is also a universal vertex in $G$. Let $x$ and $y$ be two arbitrary vertices of $G$ that are not universal vertices of $G$. If both $x$ and $y$ belong to $G'$, then neither $x$ nor $y$ is a universal vertex of $G'$, and so by the inductive hypothesis, the set $\{x,y\}$ forms a dominating set of $G'$, and therefore also of $G$. If exactly one of $x$ and $y$ belong to $G'$, then renaming vertices if necessary we may assume that $x = v_1$ and $y \in V(G')$. In this case, since $x$ dominates the set $V(G')$ and the vertex $y$ is adjacent to $v_2$, the set $\{x,y\}$ once again forms a dominating set of $G$. Finally, if $\{x,y\} = \{v_1,v_2\}$, then $\{x,y\}$ again forms a dominating set of $G$. Hence in this case when $G = \overline{K}_2 + G'$ for some graph $G' \in {\cal M}$, property ($\star$) holds in the graph $G$.

Since property ($\star$) holds in the graph $G$, if ${\cal X}$ is an arbitrary $c$-partition of $G$, then every set in the partition ${\cal X}$ is a singleton set, and so $|{\cal X}| = n$, implying that $c_{\min}(G)=n$.~\QED
\end{proof}

\medskip
We are now in a position to characterize graphs $G$ of order $n$ satisfying $c_{\min}(G) = n$.

\begin{theorem}
\label{thm:char-cmin-n}
If $G$ is a connected graph of order $n \ge 3$, then $c_{\min}(G) = n$ if and only if $G \in {\cal M}$.
\end{theorem}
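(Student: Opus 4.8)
The plan is to prove the two directions separately, with the backward direction already done by Proposition~\ref{prop:M}. So the heart of the matter is the forward direction: if $G$ is connected of order $n \ge 3$ with $c_{\min}(G) = n$, then $G \in {\cal M}$. I would argue by (strong) induction on $n$, and the natural strategy is to strip off universal vertices and recurse, exactly mirroring the recursive construction of ${\cal M}$ (adding $K_1$) together with the observation that one must also handle the $\overline{K}_2$ step.

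First I would record the structural meaning of $c_{\min}(G) = n$. Since every $c$-partition has order at most $n$ and $c_{\min}(G)$ is the \emph{minimum} order of a minimal $c$-partition, the condition $c_{\min}(G)=n$ forces \emph{every} $c$-partition of $G$ to have order exactly $n$; equivalently, the only $c$-partition of $G$ is the partition into singletons, and it is a (minimal) $c$-partition, which in turn forces property~($\star$): every two distinct non-universal vertices form a dominating set (otherwise, by Observation~\ref{obs0}, merging those two singletons would give a strictly coarser $c$-partition, contradicting minimality of the all-singletons partition / the value $n$). Conversely ($\star$) implies $c_{\min}(G)=n$. So the task reduces to: \emph{characterize connected graphs of order $n\ge 3$ satisfying ($\star$)}, and show these are exactly the members of ${\cal M}$.

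Next I would handle the universal vertices. If $G$ has a universal vertex $v$, let $G' = G - v$. One checks $G'$ is connected (a connected graph minus a universal vertex is connected when $n \ge 3$... actually I must be careful: removing a universal vertex can disconnect, e.g. $K_1 + \overline{K}_3$; but in that case $G'$ has $\ge 3$ components each an isolated vertex, and any two of those non-universal vertices fail to dominate $G$ — wait, they do dominate because... no: in $K_1+\overline K_3$ two of the leaves do \emph{not} dominate the third leaf, so ($\star$) fails, contradiction). So under ($\star$), $G-v$ must itself satisfy ($\star$) or be trivial, and I would show $c_{\min}(G') = n-1$ via Proposition~\ref{propGprim}, so that by induction $G' \in {\cal M}$ (or $G' \in \{K_1, K_2, \overline K_2\}$ as base cases), giving $G = K_1 + G' \in {\cal M}$. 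The real work — and the main obstacle — is the case where $G$ has \emph{no} universal vertex: I must show $G = \overline{K}_2 + H$ for some $H \in {\cal M}$. Here I would pick a vertex $x$ of minimum degree; since $x$ is not universal there is a non-neighbor $y$, and ($\star$) says $\{x,y\}$ dominates $G$. I would argue that $x$ has exactly one non-neighbor: if $x$ had two non-neighbors $y,z$, then since $\{x,y\}$ dominates $G$, $z \in N(y)$, and similarly $y \in N(z)$ and every other vertex is adjacent to $x$ or $y$; playing the pairs $\{x,y\},\{x,z\},\{y,z\}$ against each other, together with a careful count using minimality of $\deg(x)$, should force a universal vertex or a contradiction. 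Once every vertex has a \emph{unique} non-neighbor, the non-adjacency relation is a perfect matching on the "deficient" vertices; I then show the set $U$ of universal-in-$G-\{$its partner$\}$ vertices, i.e. vertices of degree $n-2$, is nonempty and that its two endpoints of one matching edge split off as $\overline K_2$, with $H = G - \{v_1,v_2\}$ inheriting ($\star$) and connectedness, so $H \in {\cal M}$ by induction and $G = \overline K_2 + H \in {\cal M}$.

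The step I expect to be genuinely delicate is ruling out a vertex with two or more non-neighbors while simultaneously there being no universal vertex — i.e. showing that under ($\star$) the complement $\overline G$ is a disjoint union of edges (a matching, possibly with isolated vertices corresponding to universal vertices of $G$). The clean way is probably: for distinct non-universal $u,w$ with $uw \notin E(G)$, apply ($\star$) to see $N[u]\cup N[w] = V$; then if $u$ also misses a third vertex $t$, applying ($\star$) to the pairs containing $t$ forces $t$ adjacent to everything except possibly $u$, and iterating shows $\overline G$ restricted to the non-universal vertices has maximum degree $1$. I would then need the small-order base cases ($n=3,4$) checked by hand exactly as in Proposition~\ref{prop:M}, and the induction closes. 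I should also double-check the boundary issue that when we peel $\overline K_2$ the remainder $H$ could have order $1$ (so $H = K_1$, $G = \overline K_2 + K_1 = P_3$) or order $2$ — these are precisely the base graphs of ${\cal M}$, so no gap arises.
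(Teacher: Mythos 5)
Your proposal is correct and follows essentially the same route as the paper: reduce $c_{\min}(G)=n$ to property ($\star$) (every two non-universal vertices dominate $G$), then induct by peeling off $\overline{K}_2$. A minor difference: the paper does not strip universal vertices first via Proposition~\ref{propGprim}; it only special-cases $G=K_n$ and otherwise always peels $\overline{K}_2$, letting any universal vertices fall into $G'$ --- both variants work. The one substantive comment is that the step you flag as ``genuinely delicate,'' namely showing that each non-universal vertex has a \emph{unique} non-neighbor, is in fact immediate and needs no degree count, no minimality of $\deg(x)$, and no iteration: if $v$ is non-universal and $y,z$ are two distinct non-neighbors of $v$, then $y$ and $z$ are themselves non-universal (each misses $v$), so ($\star$) applied to the pair $\{y,z\}$ makes $\{y,z\}$ a dominating set of $G$; yet $v\notin N[y]\cup N[z]$, a contradiction. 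This two-line observation is exactly the paper's argument, and it also yields your claim that non-adjacency is a matching on the non-universal vertices. The remaining step you sketch --- that the unique non-neighbor $v'$ of $v$ is adjacent to all of $N(v)$, so that $G=\overline{K}_2+(G-\{v,v'\})$ --- follows by applying ($\star$) to $\{u,v\}$ for each non-universal neighbor $u$ of $v$ (universal neighbors are adjacent to $v'$ anyway), and the inherited property ($\star$) for $G'$ together with the base cases $n'\in\{1,2\}$ closes the induction as you indicate.
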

\begin{proof}
Let $G$ be a connected graph of order $n \ge 3$. If $G \in {\cal M}$, then by Proposition~\ref{prop:M}, $c_{\min}(G) = n$. Hence it suffices for us to prove that if $c_{\min}(G) = n$, then $G \in {\cal M}$. We prove by induction on $n \ge 3$ that $G \in {\cal M}$. If $n = 3$, then since $G$ is a connected graph either $G = P_3$ or $G = K_3$. In both cases, $G \in {\cal M}$. This establishes the base case. Suppose that $n \ge 4$ and that if $G'$ is a connected graph of order~$n'$ where $3 \le n' < n$ and $c_{\min}(G') = n'$, then $G' \in {\cal M}$.

Let $G$ be a connected graph of order~$n$ satisfying $c_{\min}(G) = n$, and let $V=\{v_1,\ldots,v_n\}$. Thus, $G$ contains a unique $c$-partition, namely the partition in which every set is a singleton set. In particular, $G$ contains a unique $c_{\min}$-partition, namely the partition $\Psi = \{ \{v_1\}, \{v_2\}, \ldots, \{v_n\} \}$. Since the $c_{\min}$-partition $\Psi$ is not a proper refinement of any other $c$-partition in $G$, by Proposition~\ref{obs0} if $v_i$ and $v_j$ are distinct vertices that are not universal vertices of $G$, then the set $\{v_i,v_j\}$ is a dominating set of $G$ for all $i,j \in [n]$. If $G = K_n$, then by repeated applications of the join operation $K_1 + H$ where $H \in {\cal M}$, the graph $G \in {\cal M}$ noting that $K_2 \in {\cal M}$. Hence, we may assume that $G \ne K_n$, for otherwise the desired result is immediate. In particular, $G$ contains at least two vertices that are not universal vertices.

Let $v$ be an arbitrary vertex of $G$ that is not a universal vertex, and consider the set $S_v = V \setminus N_G[v]$. Thus, the set $S_v$ consists of all vertices of $G$ different from $v$ that are not adjacent to $v$. By supposition, $|S_v| \ge 1$. Suppose that $|S_v| \ge 2$, and let $\{x,y\} \subseteq S_v$. Since neither $x$ nor $y$ is adjacent to~$v$, the vertices $x$ and $y$ are not universal vertices of $G$. By our earlier observations, the set $\{x,y\}$ is therefore a dominating set of $G$. However, the vertex $v$ is not dominated by the set $\{x,y\}$, a contradiction. Hence, $|S_v| = 1$.

Let $S_v = \{v'\}$. Let $u$ be an arbitrary neighbor of $v$, and so $u \in N_G(v)$. If $u$ is a universal vertex, then $u$ is adjacent to $v'$. If $u$ is not a universal vertex, then by our earlier observations, the set $\{u,v\}$ is therefore a dominating set of $G$. In particular, this implies that the vertex $u$ is adjacent to $v'$. Hence, the vertex $v'$ is adjacent to every vertex in $N_G(v)$. Thus, $G = \overline{K}_2 + G'$ where $G' = G - \{v,v'\}$. Let $G'$ have order~$n'$, and so $n = n' - 2$. If $G'$ is a complete graph $K_{n-2}$, then $G' \in {\cal M}$, implying that $G' \in {\cal M}$.

Hence we may assume that $G'$ contains at least two vertices that are not universal vertices in $G'$. Let $x$ and $y$ be two distinct vertices that are not universal vertices of $G'$. Since $x$ and $y$ are not universal vertices of $G$, by our earlier observations that $\{x,y\}$ is a dominating set of $G$. This in turn implies that $\{x,y\}$ is a dominating set of $G'$. This is true for every two distinct vertices that are not universal vertices of $G'$. Hence, $c_{\min}(G') = n'$. If $n' = 2$, then either $G' = K_2$, in which case $G = K_4 - e$, or $G' = \overline{K}_2$, in which case $G = C_4$. In both cases, $G \in {\cal M}$. Hence, we may assume that $n' \ge 3$, for otherwise the desired result follows. Thus by our earlier properties of the graph $G'$, we infer that $G'$ is a connected graph. As observed earlier, $c_{\min}(G') = n'$. Applying the inductive hypothesis to $G'$, we infer that $G' \in {\cal M}$. This in turn implies that $G = \overline{K}_2 + G' \in {\cal M}$.~\QED
\end{proof}

\section{Graphs with small minmin coalition number}
\label{S:le4}

In this section, we study graphs with small minmin coalition number. We characterize graphs $G$ with no universal vertex satisfying $c_{\min}(G) = 2$. We present necessary and sufficient condition for a graph $G$ to satisfy $c_{\min}(G) \ge 3$, and necessary and sufficient condition for a graph $G$ to satisfy $c_{\min}(G) \ge 4$.  We first prove a necessary condition for a graph $G$ to satisfy $c_{\min}(G) = 2$.

\begin{lemma}
\label{lem:deg1}
If $G$ is an isolate-free graph with $\delta(G) = 1$ that does not contain a universal vertex, then $c_{\min}(G)=2$.
\end{lemma}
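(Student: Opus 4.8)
The plan is to prove $c_{\min}(G) \le 2$; since $n = n(G) \ge 3$ here (a one-vertex graph has an isolated vertex, and the only isolate-free graph on two vertices is $K_2$, both of whose vertices are universal), Theorem~\ref{thmf} already gives $c_{\min}(G) \ge 2$, and the two bounds together yield $c_{\min}(G) = 2$.

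The structural point I would isolate first is that \emph{every $c$-partition of $G$ consisting of exactly two parts is automatically a minimal $c$-partition}. Indeed, suppose ${\cal X} = \{X,Y\}$ is a two-part $c$-partition and ${\cal Y}$ is a $c$-partition with ${\cal Y} < {\cal X}$. Then every part of ${\cal X}$ is contained in a part of ${\cal Y}$, so ${\cal Y}$ is obtained from ${\cal X}$ by merging parts and therefore has strictly fewer than two parts, i.e.\ ${\cal Y} = \{V\}$. But the one-part partition $\{V\}$ is a $c$-partition only when $V$ is a singleton dominating set, that is, when $n = 1$, which is not our situation. Hence no such ${\cal Y}$ exists, and ${\cal X}$ is minimal. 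Consequently it suffices to produce a partition of $V$ into two parts, neither of which is a dominating set of $G$: such a partition is a $c$-partition (the two parts form a coalition because their union is all of $V$), and by the remark above it is minimal, so $c_{\min}(G) \le 2$.

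To construct the partition, let $v$ be a vertex with $\deg_G(v) = 1$ and let $u$ be its unique neighbor; set $X = \{u,v\}$ and $Y = V \setminus \{u,v\}$, which is nonempty since $n \ge 3$. The set $Y$ is not a dominating set because $N_G[v] = \{u,v\}$ is disjoint from $Y$, so $v$ is not dominated by $Y$. The set $X$ is not a dominating set either: since $v \in N_G(u)$ we have $N_G[v] \subseteq N_G[u]$, hence the set of vertices dominated by $X$ is $N_G[u] \cup N_G[v] = N_G[u]$, and this is a proper subset of $V$ precisely because $u$ is not a universal vertex — this is the only place the hypothesis ``$G$ has no universal vertex'' is used. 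Thus $\{X,Y\}$ is the desired partition.

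The only step that needs a little care — and the ``main obstacle'', such as it is — is the minimality claim of the second paragraph: one must verify that the only partition strictly coarser than a two-part partition is the trivial one-part partition, and that the one-part partition fails to be a $c$-partition once $n \ge 2$. Everything else is a direct check against the definitions. (I would also remark that the argument in fact only uses the degree-$1$ vertex to guarantee that $N_G[v]$ is a non-dominating set; more generally, the same two-part construction $\{N_G[v],\, V \setminus N_G[v]\}$ shows $c_{\min}(G) = 2$ for any isolate-free graph $G$ of order $n \ge 3$ having a non-universal vertex $v$ with $N_G[v]$ not a dominating set.)
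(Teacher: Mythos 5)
Your proof is correct and follows essentially the same route as the paper: both take the partition $\{\{v,u\},\,V\setminus\{v,u\}\}$ where $v$ is a degree-one vertex and $u$ its neighbor, and argue that neither part dominates (the first because $u$ is not universal, the second because it misses $N[v]$). The only cosmetic difference is that you verify minimality of the two-part partition by hand, whereas the paper gets $c_{\min}(G)\le 2$ directly from its Proposition~\ref{prop:upperb}, which bounds $c_{\min}$ by the order of \emph{any} $c$-partition.
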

\begin{proof}
Let $G$ be an isolate-free graph with $\delta(G) = 1$ that does not contain a universal vertex. Suppose, to the contrary, that $c_{\min}(G) \ne 2$. Hence by Theorem \ref{thmf}, $c_{\min}(G)\ge 3$. In particular, this implies that $G$ has order at least~$3$. Let $x$ be a vertex of degree~$1$ in $G$, and let $y$ be its only neighbor. We now consider the partition $\Psi = (V_1,V_2)$ of $V(G)$ into sets $V_1 = \{x,y\}$ and $V_2 = V(G) \setminus V_1$. Since the vertex $x$ is not dominated by $V_2$, the set $V_2$ is not a dominating set of $G$. By supposition the graph $G$ does not contain a universal vertex. In particular, the vertex $y$ is not a universal vertex of $G$, implying that the set $V_1$ is not a dominating set of $G$. Hence, the sets $V_1$ and $V_2$ form a coalition in $G$. Thus, $\Psi$ is a $c$-partition of $G$, implying by Proposition~\ref{prop:upperb} that $c_{\min}(G) \le |\Psi| = 2$, a contradiction.~\QED
\end{proof}

\medskip
As a consequence of Lemma~\ref{lem:deg1}, we can determine the minmin coalition number of a tree.

\begin{corollary}
\label{cor:tree}
If $T$ is a nontrivial tree, then $c_{\min}(T)=2$, unless $T$ is a star $T \cong K_{1,r}$ where $r \ge 2$.
\end{corollary}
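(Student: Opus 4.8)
The plan is to split into two cases according to the structure of the nontrivial tree $T$: either $\delta(T) = 1$ and $T$ has no universal vertex, or $T$ does have a universal vertex. Every nontrivial tree has $\delta(T) = 1$ since a tree has at least two leaves, so the only way to be outside the first case is for $T$ to contain a universal vertex.

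First I would dispose of the generic case. If $T$ contains no universal vertex, then since $T$ is isolate-free (being a nontrivial tree, hence connected on at least two vertices) and $\delta(T) = 1$, Lemma~\ref{lem:deg1} applies directly and gives $c_{\min}(T) = 2$. This handles all nontrivial trees except those with a universal vertex.

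Second I would analyze the case where $T$ has a universal vertex $v$, that is, $\deg_T(v) = n - 1$. Since $T$ is a tree (acyclic), the subgraph $T - v$ can contain no edge — an edge $xy$ in $T-v$ together with $v$ would form a triangle. Hence $T - v = \overline{K}_{n-1}$, which forces $T \cong K_{1,n-1}$, a star. If $n = 2$ this is $K_2 = K_{1,1}$, for which the only $c$-partition is $\{\{v_1\},\{v_2\}\}$ (each vertex is a singleton dominating set), giving $c_{\min}(T) = 2$; so $K_2$ is \emph{not} an exception and the statement should be read with $r \ge 2$ in mind, i.e. $n \ge 3$. For $n \ge 3$, so $T \cong K_{1,r}$ with $r \ge 2$, the center $v$ is the unique vertex whose singleton is a dominating set, and no other singleton dominates; moreover any set containing $v$ is dominating while any set avoiding $v$ and having at least two leaves fails to dominate some third leaf (as $r \ge 2$, in fact $r \ge 2$ only guarantees there are two leaves, so I should be careful: if $r = 2$, $T = P_3$, and $\{v_1\}\cup\{v_3\}$, the two leaves, \emph{is} a dominating set). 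I would instead argue directly: in $K_{1,r}$ with $r\ge 2$, apply Proposition~\ref{propGprim} with the single universal vertex $v$ removed, giving $c_{\min}(K_{1,r}) = c_{\min}(\overline{K}_r) + 1$; and $\overline{K}_r$ has no dominating set at all when $r \ge 2$ (an isolated vertex is not dominated unless it is in the set), so it has no $c$-partition — hence $\overline{K}_r$ has no minimal $c$-partition, and one must check what $c_{\min}$ means here. This is the subtle point and the main obstacle: whether the additive formula of Proposition~\ref{propGprim} even applies when $G'$ has no $c$-partition, and if so what $c_{\min}$ of such a graph is.

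To handle this cleanly I would avoid Proposition~\ref{propGprim} for the exceptional stars and instead compute directly. In $T \cong K_{1,r}$ with center $v$ and leaves $u_1,\dots,u_r$, $r \ge 2$: the only dominating set that is a proper subset of $V(T)$ and avoids $v$ would need to contain all of $u_1,\dots,u_r$, but that set is still not dominating if it omits $v$? No — $\{u_1,\dots,u_r\}$ dominates $v$ and all leaves, so it \emph{is} a dominating set equal to $V(T)\setminus\{v\}$. The point is that any $c$-partition must place $\{v\}$ as its own block (since $v$ is universal, every block containing $v$ is dominating, forcing $\{v\}$ to be a block), and the remaining blocks partition $\{u_1,\dots,u_r\}$; two leaf-blocks $X,Y$ form a coalition iff $X \cup Y \ne \{u_1,\dots,u_r\}$ is non-dominating and neither alone is dominating — but no set of leaves is dominating unless it is all leaves, so \emph{any} two distinct leaf-blocks whose union is a proper subset of the leaves form a coalition, and when the union is all leaves it is dominating so they do \emph{not} form a coalition. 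Thus for $r \ge 3$ the partition into singletons $\{v\},\{u_1\},\dots,\{u_r\}$ is a $c$-partition and, since merging any two leaf-singletons yields a non-dominating set (as $r - 2 \ge 1$ leaves remain outside), Observation~\ref{obs0} does not apply — wait, Observation~\ref{obs0} says we \emph{can} merge them, making this partition a proper refinement, hence \emph{not} minimal. The genuinely minimal $c$-partitions of $K_{1,r}$ are those where the leaf-blocks cannot be merged, i.e. any two leaf-blocks have union equal to all leaves, i.e. there are exactly two leaf-blocks; so the minimal $c$-partitions have the form $\{\{v\}, A, B\}$ with $\{A,B\}$ a partition of the leaves, giving order $3$. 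Hence $c_{\min}(K_{1,r}) = 3 > 2$ for all $r \ge 2$, confirming stars are genuine exceptions; and combined with the first case this proves the corollary. I would write up exactly this block-structure argument for the star, citing Observation~\ref{obs0} for minimality, as the substantive content beyond Lemma~\ref{lem:deg1}.
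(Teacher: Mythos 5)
Your overall route is the paper's: dispose of every tree without a universal vertex via Lemma~\ref{lem:deg1}, observe that a universal vertex in a tree forces $T \cong K_{1,r}$, and then analyse the block structure of a $c$-partition of the star directly (the universal vertex must be a singleton block, the leaves must split into exactly two blocks, so $c_{\min}(K_{1,r})=3$). Your final characterization of the minimal $c$-partitions as $\{\{v\},A,B\}$ and the conclusion $c_{\min}(K_{1,r})=3>2$ are correct, and your decision to avoid Proposition~\ref{propGprim} here is sound.

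There is, however, a local error in the star analysis that a clean write-up must remove: you invert the definition of a coalition. Two leaf-blocks $X,Y$ form a coalition precisely when neither is dominating but $X\cup Y$ \emph{is} dominating, i.e.\ exactly when $X\cup Y$ is the full set of leaves --- the opposite of what you assert. Consequently your claim that for $r\ge 3$ the all-singletons partition $\{\{v\},\{u_1\},\dots,\{u_r\}\}$ is a $c$-partition is false: a leaf-singleton $\{u_i\}$ forms a coalition with no other block (its union with another leaf-singleton misses a third leaf and so is not dominating, and $\{v\}$ is itself dominating so cannot be a coalition partner), so that partition is not a $c$-partition at all. This also means your appeal to Observation~\ref{obs0} to discard it is misplaced, since that observation presupposes one already has a $c$-partition. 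The repair is easy and is essentially what the paper does: since $\{v\}$ must be its own block and the set of all leaves is a dominating set that is neither a singleton nor able to form a coalition, the leaves must occupy at least two blocks, giving $|\Psi|\ge 3$ for \emph{every} $c$-partition; and $\{\{v\},A,B\}$ with $\{A,B\}$ any partition of the leaves is a $c$-partition, so $c_{\min}(K_{1,r})=3$. With that passage corrected, your proof is the paper's proof.
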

\begin{proof}
Let $T$ be a nontrivial tree. Thus, $T$ has order~$n \ge 2$. If $n = 2$, then $c_{\min}(T)=2$. Hence we may assume that $n \ge 3$, for otherwise the desired result is immediate. If $T$ does not contain a universal vertex, then we immediately infer from Lemma~\ref{lem:deg1} that $c_{\min}(G)=2$. Hence we may further assume that $T$ contains a universal vertex. Thus, $T$ is a star $K_{1,n-1}$. By our earlier assumptions, $n \ge 3$. If $\Psi$ is an arbitrary $c$-partition of $T$, then the universal vertex, $v$ say, of $T$ form a singleton set $\{v\}$ in $\Psi$. Since $n \ge 3$, no leaf of $T$ is a dominating set of $T$. Since the set of leaves in $T$ forms a dominating set of $T$, the set of leaves cannot be a set in the $c$-partition $\Psi$, implying that $\Psi$ contains at least two sets different from the singleton set $\{v\}$. Therefore, $|\Psi| \ge 3$. Since this is true for every $c$-partition of $T$, we infer that $c_{\min}(T) \ge 3$. On the other hand, if $(V_1,V_2)$ is an arbitrary partition of the set of leaves of $T$, then the partition ${\cal X} = (V_1,V_2,V_3)$ where $V_3 = \{v\}$ is a $c$-partition of $T$, and so by Proposition~\ref{prop:upperb} $c_{\min}(G) \le |{\cal X}| = 3$. Consequently, $c_{\min}(T) = 3$.~\QED
\end{proof}

\medskip
We present next a necessary and sufficient condition for a graph $G$ to satisfy $c_{\min}(G) \ge 3$.

\begin{theorem}
\label{thm:cmin3}
If $G$ is a graph that does not contain a universal vertex, then $c_{\min}(G) \ge 3$ if and only if for every vertex $v \in V$, the set $N[v]$ is a dominating set of $G$.
\end{theorem}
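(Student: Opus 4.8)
The plan is to prove the two implications separately, each time using Theorem~\ref{thmf} to reduce to the sole alternative $c_{\min}(G) = 2$, and exploiting the elementary reformulation that a set $S \subseteq V$ is \emph{not} a dominating set of $G$ exactly when some vertex $w$ satisfies $N[w] \cap S = \emptyset$, equivalently $N[w] \subseteq V \setminus S$.

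For the implication ``$c_{\min}(G) \ge 3 \Rightarrow$ every $N[v]$ is dominating'', I would argue by contraposition. Suppose some vertex $v$ has the property that $N[v]$ is not a dominating set of $G$, and consider the partition $\{V_1, V_2\}$ of $V(G)$ with $V_1 = N[v]$ and $V_2 = V(G) \setminus N[v]$. Since $N[v]$ is not dominating we have $V_2 \ne \emptyset$, so this is a genuine partition into two nonempty parts; $V_1$ is not a dominating set by assumption, and $V_2$ is not a dominating set because every neighbor of $v$ lies in $N[v] = V_1$, so $v$ is undominated by $V_2$. Hence $V_1$ and $V_2$ form a coalition and $\{V_1, V_2\}$ is a $c$-partition of $G$; by Proposition~\ref{prop:upperb}, $c_{\min}(G) \le 2$, and then Theorem~\ref{thmf} gives $c_{\min}(G) = 2$, as required by the contrapositive.

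For the converse, assume every closed neighborhood $N[v]$ is a dominating set of $G$ and suppose, for contradiction, that $c_{\min}(G) < 3$; by Theorem~\ref{thmf} this means $c_{\min}(G) = 2$, so $G$ admits a $c$-partition $\Psi = \{V_1, V_2\}$. Because $G$ has no universal vertex, no singleton subset of $V(G)$ is a dominating set, so in the definition of a $c$-partition each $V_i$ must form a coalition with the other one; in particular neither $V_1$ nor $V_2$ is a dominating set of $G$. Using the reformulation above, choose a vertex $a$ with $N[a] \subseteq V_2$ (witnessing that $V_1$ is not dominating) and a vertex $b$ with $N[b] \subseteq V_1$ (witnessing that $V_2$ is not dominating). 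Since $V_1 \cap V_2 = \emptyset$, we get $N[a] \cap N[b] = \emptyset$; in particular $b \notin N[a]$, so $N[a]$ does not dominate $b$, contradicting the hypothesis that $N[a]$ is a dominating set. Therefore $c_{\min}(G) \ge 3$.

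I do not anticipate a serious obstacle: the whole argument rests on the single observation that a non-dominating part of a partition ``certifies'' its failure by containing a vertex whose \emph{entire} closed neighborhood lies in the complementary part, which is exactly what forces the two witness vertices $a$ and $b$ to have disjoint closed neighborhoods in the converse direction. The only points meriting a careful check are the degenerate cases --- that ``no universal vertex'' already forces $n \ge 2$ so that Theorem~\ref{thmf} applies, and that a two-part $c$-partition can never have one part be a singleton dominating set under this hypothesis --- but neither affects the line of reasoning.
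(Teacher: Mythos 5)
Your proposal is correct and follows essentially the same route as the paper: the forward direction is the paper's argument recast as a contrapositive (the partition $\{N[v],\,V\setminus N[v]\}$ is a $c$-partition when $N[v]$ fails to dominate), and the converse likewise reduces to a two-set $c$-partition neither of whose parts dominates. The only cosmetic difference is in the final contradiction of the converse: the paper takes one witness $u$ with $N[u]\subseteq B$ and invokes superhereditarity of domination to conclude $B$ dominates, whereas you take two witnesses $a,b$ and derive the equally valid contradiction $N[a]\cap N[b]=\emptyset$.
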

\begin{proof}
Let $G$ be a graph that does not contain a universal vertex. Suppose firstly that $c_{\min}(G) \ge 3$. Let $v$ be an arbitrary vertex of $G$, and let $S = N_G[v]$. Since the vertex $v$ is not adjacent to any vertex in $V \setminus S$, the set $V \setminus S$ is not a dominating set of $G$. If $S$ is not a dominating set of $G$, then $\{S,V \setminus S\}$ is a coalition partition of $G$, and so by Proposition~\ref{prop:upperb} $c_{\min}(G) \le 2$, a contradiction. Hence, $S$ is a dominating set of $G$.

Conversely, suppose that the set $N[v]$ is a dominating set of $G$ for every vertex $v \in V$. We show that $c_{\min}(G) \ge 3$. Suppose, to the contrary, that $c_{\min}(G)<3$. By Theorem \ref{thmf}, this implies that $c_{\min}(G)=2$. Let ${\cal X} = \{A, B\}$ be a $c_{\min}$-partition of $G$. Since $G$ has no universal vertex, neither set $A$ nor $B$ is a dominating set of $G$. Let $u$ be a vertex of $G$ not dominated by the set $A$, and so $A \cap N[u] = \emptyset$. Since $B = V \setminus A$, we have $N[u] \subseteq B$. By supposition the set $N[u]$ is a dominating set of $G$. Since the property of being a dominating set is superhereditary, the set $B$ is therefore a dominating set of $G$, a contradiction. Hence, $c_{\min}(G) \ge 3$, as desired.~\QED
\end{proof}

\medskip
As an immediate consequence of Theorem~\ref{thmf} and Theorem~\ref{thm:cmin3}, we have the following characterization of graphs with no universal vertex satisfying $c_{\min}(G) = 2$.

\begin{corollary}
\label{cor:cmin2}
If $G$ is a graph that does not contain a universal vertex, then $c_{\min}(G) = 2$ if and only if there exists a vertex $v \in V$ such that $N[v]$ is not a dominating set of $G$.
\end{corollary}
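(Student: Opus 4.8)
The plan is to derive this directly from the two preceding results, Theorem~\ref{thmf} and Theorem~\ref{thm:cmin3}, since the corollary is essentially their contrapositive combination. First I would invoke Theorem~\ref{thmf} to note that for any graph $G$ of order $n \ge 2$ we have $c_{\min}(G) \ge 2$, so $c_{\min}(G) = 2$ holds precisely when $c_{\min}(G) < 3$, i.e.\ when it is \emph{not} the case that $c_{\min}(G) \ge 3$. (One should either assume $n \ge 2$ here, or dispense with the trivial $n \le 1$ cases separately; a graph with a single vertex has that vertex as a universal vertex, so the hypothesis ``no universal vertex'' already forces $n \ge 2$.)

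Next I would apply Theorem~\ref{thm:cmin3}, which states that for a graph $G$ with no universal vertex, $c_{\min}(G) \ge 3$ if and only if $N[v]$ is a dominating set of $G$ for every vertex $v \in V$. Negating both sides of this biconditional: $c_{\min}(G) < 3$ if and only if there exists a vertex $v \in V$ for which $N[v]$ is \emph{not} a dominating set of $G$. Combining this with the observation from the previous paragraph that, under the no-universal-vertex hypothesis, $c_{\min}(G) = 2$ is equivalent to $c_{\min}(G) < 3$, we obtain exactly the claimed statement.

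There is no substantial obstacle here — the proof is a two-line logical manipulation. The only point requiring a moment of care is making sure the hypothesis of Corollary~\ref{cor:cmin2} (no universal vertex) matches the hypothesis of Theorem~\ref{thm:cmin3} (also no universal vertex), which it does, and confirming that the lower bound $c_{\min}(G) \ge 2$ from Theorem~\ref{thmf} is what lets us upgrade ``$c_{\min}(G) < 3$'' to ``$c_{\min}(G) = 2$.'' Accordingly I would write the proof in essentially one sentence: ``By Theorem~\ref{thm:cmin3}, $c_{\min}(G) \ge 3$ if and only if $N[v]$ is a dominating set of $G$ for every $v \in V$; equivalently, $c_{\min}(G) < 3$ if and only if $N[v]$ is not a dominating set for some $v \in V$. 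Since $c_{\min}(G) \ge 2$ by Theorem~\ref{thmf}, the condition $c_{\min}(G) < 3$ is the same as $c_{\min}(G) = 2$, and the result follows.''
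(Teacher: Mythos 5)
Your proof is correct and matches the paper exactly: the paper states Corollary~\ref{cor:cmin2} as an immediate consequence of Theorem~\ref{thmf} and Theorem~\ref{thm:cmin3}, which is precisely the negation-plus-lower-bound argument you spell out. Nothing further is needed.
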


As an application of Corollary~\ref{cor:cmin2}, consider the Heawood graph and the Petersen graph illustrated in Figure~\ref{figH}(a) and (b), respectively. Since neither graph has a vertex whose closed neighborhood is a dominating set, we infer from Corollary~\ref{cor:cmin2} that the minmin coalition number is equal to~$2$ for both the Heawood graph and the Petersen graph.

\begin{figure}[hbt]
\begin{center}
{\subfloat[]{\includegraphics[width = 0.22\textwidth]{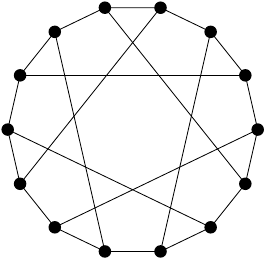}} }
\hspace{1cm}
{\subfloat[]{\includegraphics[width =0.22\textwidth]{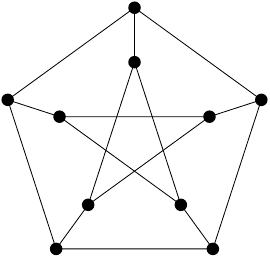}}}
\caption{(a): Heawood graph. (b): Petersen graph.}
\label{figH}
\end{center}
\end{figure}

As a further application of Corollary~\ref{cor:cmin2}, we can determine the minmin coalition number of a cycle.

\begin{corollary}
\label{cor:cycle}
The minmin coalition number of a cycle $C_n$ is given by following closed formula.
\[
c_{\min}(C_n) = \left\{
\begin{array}{cl}
2& \text{\rm if~} n\ge 6\\
3& \text{\rm if~} n=3 \text{~\rm or~} n=5.\\
4&\text{\rm if~} n=4.
\end{array}
\right.
\]
\end{corollary}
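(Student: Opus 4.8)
The plan is to split into the four cases $n=3$, $n=4$, $n=5$, and $n \ge 6$, and in each case either apply Corollary~\ref{cor:cmin2} directly or else compute $c_{\min}$ from scratch by exhibiting an explicit minimal $c$-partition and arguing no smaller one exists. Throughout, write $C_n = v_1 v_2 \cdots v_n v_1$, and note that $C_n$ has no universal vertex once $n \ge 4$ (and $C_3 = K_3$, where every vertex is universal). For $n \ge 4$, the key structural fact is that $N[v_i] = \{v_{i-1}, v_i, v_{i+1}\}$ is a dominating set of $C_n$ precisely when $n \le 5$, since a set of three consecutive vertices on a cycle dominates exactly five vertices (itself plus one neighbor on each side). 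So Corollary~\ref{cor:cmin2} immediately gives $c_{\min}(C_n) = 2$ for all $n \ge 6$: pick any vertex $v$, and $N[v]$ fails to dominate the vertex "opposite" it.

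For the three small cases I would argue directly. For $n = 3$: $C_3 = K_3$, every vertex is universal, so by the remark used throughout (every $c$-partition of a graph with a universal vertex $u$ must contain the singleton $\{u\}$), the only $c$-partition is $\{\{v_1\},\{v_2\},\{v_3\}\}$, giving $c_{\min}(C_3) = 3$. Alternatively this follows since $C_3 = K_1 + \overline{K}_2$... actually $K_3 = K_1 + K_2 \in {\cal M}$, so Theorem~\ref{thm:char-cmin-n} gives $c_{\min}(C_3) = n = 3$. For $n = 5$: $C_5$ has no universal vertex and, as noted above, $N[v_i]$ is a dominating set for every $i$, so Theorem~\ref{thm:cmin3} gives $c_{\min}(C_5) \ge 3$; for the matching upper bound exhibit a minimal $c$-partition of size $3$, for instance ${\cal X} = \{\{v_1,v_2\}, \{v_3\}, \{v_4,v_5\}\}$ — one checks each pair among these three sets, and the only pair whose union fails to dominate is... here one must verify that $\{v_1,v_2\} \cup \{v_4,v_5\}$ does dominate (it does: it misses only checking $v_3$, which is adjacent to $v_2$), while $\{v_1,v_2,v_3\}$ and $\{v_3,v_4,v_5\}$ are dominating, so no two sets can be merged, confirming minimality, and each set forms a coalition with at least one other; hence $c_{\min}(C_5) \le 3$. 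For $n = 4$: $C_4 = \overline{K}_2 + \overline{K}_2 \in {\cal M}$, so Theorem~\ref{thm:char-cmin-n} directly gives $c_{\min}(C_4) = 4$; alternatively, $\gamma(C_4) = 2$ and every $2$-subset $\{v_i, v_{i+2}\}$ or $\{v_i,v_{i+1}\}$ of $V(C_4)$ is dominating, so no coalition exists among sets of size $\ge 1$ that are non-dominating, forcing every $c$-partition to consist of singletons.

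The main obstacle is the $n=5$ (and to a lesser extent $n=4$) case: the general corollaries and Theorem~\ref{thm:cmin3} only deliver the lower bound $c_{\min}(C_5) \ge 3$, and one genuinely has to produce a concrete minimal $c$-partition of the right size and check by hand that it is minimal, i.e. that no two of its blocks can be merged and that every block forms a coalition. This is a small finite verification but it is the only place where more than a one-line invocation of earlier results is needed; everything for $n \ge 6$ is an immediate application of Corollary~\ref{cor:cmin2}, and the $n=3,4$ cases follow from membership in ${\cal M}$ together with Theorem~\ref{thm:char-cmin-n}. I would present the argument in the order $n \ge 6$ first (as the "generic" case via Corollary~\ref{cor:cmin2}), then dispose of $n = 3$ and $n = 4$ via ${\cal M}$, and finish with the hands-on verification for $n = 5$.
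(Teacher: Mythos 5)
Your proposal is correct and follows essentially the same route as the paper: Corollary~\ref{cor:cmin2} for $n\ge 6$, Theorem~\ref{thm:cmin3} for the lower bound when $n\le 5$, the observation that all $2$-subsets of $V(C_4)$ dominate (equivalently $C_4\in{\cal M}$) for $n=4$, and an explicit minimal $c$-partition of order~$3$ for $n=5$ (the paper uses $\{\{c_1\},\{c_2,c_3\},\{c_4,c_5\}\}$, which is your partition up to relabeling). Your verification that every pairwise union in the $C_5$ partition is dominating, so that no merge yields a $c$-partition, correctly establishes minimality.
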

\begin{proof}
If $n \ge 6$, then by Corollary~\ref{cor:cmin2} we have $c_{\min}(C_n) = 2$. For $n \le 5$, Theorem~\ref{thm:cmin3} implies that $c_{\min}(C_n) \ge 3$. When $n=3$, the only $c$-partition of $C_3$ consists of three singleton sets which implies that $c_{\min}(C_3) = 3$. When $n=4$, we note that every subset of vertices of $C_4$ with at least two vertices is a dominating set, implying that the only $c$-partition of $C_4$ consists of four singleton sets, and so $c_{\min}(C_4) = 4$. Suppose that $n=5$ and consider the $5$-cycle $v_1 v_2 v_3 v_4 v_5 v_1$. The partition $\{\{c_1\}, \{c_2,c_3\},\{c_4,c_5\}\}$, for example, is a minimal $c$-partition of $C_5$, implying that $c_{\min}(C_5) = 3$.~\QED
\end{proof}

\medskip
We remark that Theorem~\ref{thm:cmin3} yields a polynomial time algorithm to determine whether a graph $G$ with no universal vertex satisfies $c_{\min}(G)=2$ or $c_{\min}(G)\ne 2$. We recursively examine all the vertices of $G$ one by one. If there exists a vertex $v \in V$ such that the set $N[v]$ is not a dominating set of $G$, then we infer by Theorem~\ref{thm:cmin3} that  $c_{\min}(G)=2$. If no such vertex exists, then we infer that $c_{\min}(G) \ne 2$. Given a set of vertices of $G$, one can easily check in polynomial time whether the set is a dominating set of $G$ or not. Therefore, the total time required to determine whether $c_{\min}(G) = 2$ is polynomial. This yields the following result.

\begin{theorem}
There exists a polynomial time algorithm to determine whether for a given graph $G$ with no universal vertices the equation $c_{\min}(G)=2$ holds or not.
\end{theorem}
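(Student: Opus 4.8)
The plan is to build a straightforward decision procedure directly on top of Corollary~\ref{cor:cmin2}. Given a graph $G$ with no universal vertex, I would first compute, for each vertex $v \in V$, the closed neighborhood $N[v]$, which takes $O(n + m)$ time in total over all vertices (or $O(n^2)$ using an adjacency matrix). For a fixed $v$, checking whether $N[v]$ is a dominating set of $G$ amounts to verifying that every vertex of $V \setminus N[v]$ has a neighbor in $N[v]$; this is a single scan over the vertices and their adjacency lists, costing $O(n + m)$ time, so the whole loop over all vertices costs $O(n(n+m))$, which is polynomial in the size of $G$.

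The correctness of the procedure is exactly the content of Corollary~\ref{cor:cmin2}: if the loop discovers some vertex $v$ for which $N[v]$ is not a dominating set, then $c_{\min}(G) = 2$ and the algorithm answers ``yes''; if no such vertex is found, then by the same corollary $c_{\min}(G) \neq 2$ and the algorithm answers ``no''. Since Corollary~\ref{cor:cmin2} is already established in the excerpt, no further combinatorial argument is needed; the only remaining obligation is the running-time bookkeeping described above, together with the observation that testing whether a given vertex set dominates $G$ is a polynomial-time task.

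There is essentially no mathematical obstacle here — the theorem is a corollary of a structural characterization that has already been proved — so the ``hard part'' is merely to state the algorithm cleanly and note the polynomial bound. One point worth a sentence of care is the hypothesis ``no universal vertex'': if one wanted a procedure for arbitrary $G$, one would first detect and strip universal vertices using Proposition~\ref{propGprim}, since $c_{\min}(G) = c_{\min}(G') + k$ where $k \ge 1$ is the number of universal vertices and $G'$ is the graph with those vertices removed, so $c_{\min}(G) = 2$ forces $k = 0$ (when $G'$ is nonempty) and reduces to the universal-vertex-free case; but as stated the theorem already assumes this, so I would simply invoke Corollary~\ref{cor:cmin2} verbatim and present the linear-scan implementation, concluding that the total time is polynomial and hence the claimed algorithm exists.
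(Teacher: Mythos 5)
Your proposal is correct and follows essentially the same route as the paper: loop over all vertices, test whether $N[v]$ dominates $G$, and conclude via the characterization in Corollary~\ref{cor:cmin2} (equivalently Theorem~\ref{thm:cmin3}); the only difference is that you supply a slightly more explicit $O(n(n+m))$ running-time count than the paper bothers to. Nothing further is needed.
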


\medskip
We present next a necessary and sufficient condition for a graph $G$ to satisfy $c_{\min}(G) \ge 4$.

\begin{theorem}
\label{thmc4}
If $G$ is a graph that does not contain a universal vertex, then $c_{\min}(G) \ge 4$ if and only if for every vertex $v \in V$ and every partition $\{P,Q\}$ of $N[v]$, at least one of $P$ or $Q$ is a dominating set of $G$.
\end{theorem}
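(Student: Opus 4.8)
The plan is to establish both directions by leveraging Theorem~\ref{thm:cmin3} together with the structural meaning of a $c_{\min}$-partition of size~$3$. Recall that $c_{\min}(G) \ge 4$ is equivalent to asserting that $G$ has no minimal $c$-partition of size~$2$ or~$3$; since $G$ has no universal vertex, by Theorem~\ref{thmf} and the argument of Theorem~\ref{thm:cmin3} the nonexistence of a minimal $c$-partition of size~$2$ is already captured by the condition ``$N[v]$ is a dominating set of $G$ for every $v \in V$''. So the real content of $c_{\min}(G) \ge 4$, \emph{given} that $c_{\min}(G)\ge 3$, is the nonexistence of a minimal $c$-partition $\{A,B,C\}$ of size exactly~$3$.

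For the forward direction I would argue the contrapositive: suppose there is a vertex $v$ and a partition $\{P,Q\}$ of $N[v]$ such that neither $P$ nor $Q$ is a dominating set of $G$. Set $R = V \setminus N[v] = V \setminus (P \cup Q)$. Then $R$ cannot be a dominating set of $G$ either, since $v$ is dominated by neither $P$ nor $Q$ (as $v\in N[v]=P\cup Q$), hence $v$ must be dominated by $R$ — but $v$ has no neighbor in $R$ and $v\notin R$, contradiction; so $R$ is not dominating. Now $P\cup Q = N[v]$, which \emph{is} a dominating set by the hypothesis $c_{\min}(G)\ge 3$ and Theorem~\ref{thm:cmin3}; and $P\cup R$, $Q\cup R$ each contain $N[v]$ only if... — here I must be a little careful: I want $\{P,Q,R\}$ to be a \emph{minimal} $c$-partition. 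Each of $P,Q,R$ is a non-dominating set; $P$ and $Q$ form a coalition (their union is $N[v]$, dominating); so $\{P,Q,R\}$ is a $c$-partition provided $R\neq\emptyset$ (which holds since $c_{\min}(G)\ge 3$ forces $|S_v|\ge 1$ in the notation of earlier proofs — in fact I should just note $R=\emptyset$ would make $N[v]=V$, i.e.\ $v$ universal, excluded). For minimality I need that no two of the three sets have non-dominating union, or more precisely that the partition is not a proper refinement of a smaller $c$-partition; but actually it suffices to pass, via Observation~\ref{obs0} and Proposition~\ref{prop:upperb}, to \emph{some} minimal $c$-partition $\le \{P,Q,R\}$, which then has order at most~$3$, giving $c_{\min}(G)\le 3$, as desired. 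So minimality need not be checked directly.

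For the reverse direction, suppose for every $v$ and every partition $\{P,Q\}$ of $N[v]$ at least one of $P,Q$ dominates $G$; I want $c_{\min}(G)\ge 4$. First, taking the partition $\{N[v],\emptyset\}$... this is degenerate, so instead observe the hypothesis with $P=\{v\}$, $Q=N(v)$ applied over all $v$ readily implies $N[v]$ is dominating whenever $N(v)$ is not a singleton dominating set — and when $N(v)$ \emph{is} dominating, again $N[v]\supseteq N(v)$ is dominating by superheredity; so the hypothesis implies the Theorem~\ref{thm:cmin3} condition and hence $c_{\min}(G)\ge 3$. It remains to rule out a minimal $c$-partition $\{A,B,C\}$. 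Suppose one exists. None of $A,B,C$ is dominating. Since $\{A,B,C\}$ is minimal, by Observation~\ref{obs0} every pairwise union $A\cup B$, $A\cup C$, $B\cup C$ must be dominating (otherwise we could merge and refine). Pick a vertex $u$ not dominated by $A$; then $N[u]\subseteq B\cup C$. Let $P = N[u]\cap B$ and $Q = N[u]\cap C$; this is a partition of $N[u]$, so by hypothesis one of them, say $P\subseteq B$, is dominating — but then $B\supseteq P$ is dominating, contradiction. Hence no such $\{A,B,C\}$ exists, and combined with $c_{\min}(G)\ge 3$ and Theorem~\ref{thmf} we get $c_{\min}(G)\ge 4$.

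The main obstacle I anticipate is the bookkeeping in the forward direction around degenerate partitions (ensuring $R\neq\emptyset$, and that $\{P,Q\}$ is a genuine two-part partition with both parts possibly needing to be nonempty for the coalition definition) and, in the reverse direction, correctly deducing that in a \emph{minimal} size-$3$ $c$-partition all three pairwise unions are dominating — this uses Observation~\ref{obs0} in the right direction and is the linchpin that lets the hypothesis be applied to the partition $\{N[u]\cap B,\ N[u]\cap C\}$ of $N[u]$. Everything else is a short application of superheredity of domination and Proposition~\ref{prop:upperb}.
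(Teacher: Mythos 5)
Your strategy is the same as the paper's in both directions---reduce everything to Theorem~\ref{thm:cmin3} and analyse a putative order-$3$ $c$-partition---and your remark that minimality of $\{P,Q,R\}$ need not be verified, because Proposition~\ref{prop:upperb} already yields $c_{\min}(G)\le 3$ from any $c$-partition of order~$3$, is exactly the paper's device. But there is a genuine gap in your forward direction: you assert that $\{P,Q,R\}$ is a $c$-partition after checking only that $P$ and $Q$ form a coalition. The third set $R=V\setminus N[v]$ is neither a singleton dominating set nor (as you correctly showed) a dominating set, so by the definition of a $c$-partition it must itself form a coalition with $P$ or with $Q$; you begin to examine $P\cup R$ and $Q\cup R$ and then abandon that thread in favour of the (different) minimality issue, which you resolve, while the coalition partner for $R$ is never produced. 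The missing step is short but essential: taking $v\in P$ without loss of generality, $P\cup R$ is a dominating set because $v$ dominates $N[v]=P\cup Q$ and $R$ covers everything else, so $R$ forms a coalition with $P$. This is precisely the sentence the paper supplies, and without it the conclusion $c_{\min}(G)\le 3$ has not been derived.

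A second, smaller omission occurs in the reverse direction: you apply the hypothesis to $\{N[u]\cap B,\ N[u]\cap C\}$ as a partition of $N[u]$ without checking that both parts are nonempty, and under the paper's convention the parts of a partition are nonempty, so the hypothesis cannot be invoked if one of them is empty. The repair is the paper's case split: if $N[u]\cap B=\emptyset$ then $N[u]\subseteq C$, and since $c_{\min}(G)\ge 3$ forces $N[u]$ to be a dominating set by Theorem~\ref{thm:cmin3}, the superset $C$ is dominating by superheredity, contradicting that no part of the partition dominates; symmetrically for $N[u]\cap C$. Finally, your appeal to Observation~\ref{obs0} to conclude that all three pairwise unions $A\cup B$, $A\cup C$, $B\cup C$ dominate is correct but is never used in the argument that follows, so it is not the ``linchpin'' you describe and can simply be dropped.
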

\begin{proof}
Let $G$ be a graph that does not contain a universal vertex. Suppose firstly that $c_{\min}(G) \ge 4$. Let $v$ be an arbitrary vertex of $G$, and let $\{P,Q\}$ be an arbitrary partition of $N[v]$. Thus, $P \ne \emptyset$, $Q \ne \emptyset$ and $P \cup Q = N[v]$. Suppose that $P = \{v\}$, and so $Q=N(v)$. By Theorem~\ref{thm:cmin3}, $N[v]$ is a dominating set of $G$, implying that $Q$ is a dominating set, yielding the desired result. Analogously, if $Q = \{v\}$, then the desired result follows. Hence we may assume that $P \ne \{v\}$ and $Q\ne \{v\}$. Renaming the sets $P$ and $Q$ if necessary, we may assume that $v \in P$. Let $R = V \setminus (P \cup Q)$. Since $R$ contains no neighbors of $v$, it is not a dominating set of $G$. We now consider the partition ${\cal X} = \{P, Q, R\}$ of $V$. Suppose that neither $P$ nor $Q$ is a dominating set of $G$. Since $c_{\min}(G) \ne 2$, by Theorem~\ref{thm:cmin3} the set $P \cup Q$ is a dominating set. Additionally, $P \cup R$ is a dominating set of $G$ noting that the vertex $v$ dominates all vertices in $Q$. Therefore, $\cal X$ is a $c$-partition of $G$, which implies that $c_{\min}(G) \le 3$, a contradiction. Hence, at least one of $P$ or $Q$ is a dominating set.

Conversely, suppose that for every vertex $v \in V$ and every partition $\{P,Q\}$ of $N[v]$, at least one of $P$ or $Q$ is a dominating set of $G$. We show that $c_{\min}(G) \ge 4$. Suppose, to the contrary, that $c_{\min}(G) < 4$. Hence by Theorem \ref{thmf}, $c_{\min}(G)$ can only be equal to $2$ or $3$. If $c_{\min}(G)=2$, then by Corollary~\ref{cor:cmin2} there exists a vertex $v \in V$ such that $N[v]$ is not a dominating set of $G$. However in this case for every possible partition $\{P,Q\}$ of $N[v]$, neither $P$ nor $Q$ is a dominating set of $G$, a contradiction. Therefore, $c_{\min}(G) = 3$. Let ${\cal X} = \{A, B, C\}$ be a $c_{\min}$-partition of $G$. Thus, ${\cal X}$ is a minimal $c$-partition of $G$ and $|{\cal X}| = 3$. Since $G$ has no universal vertex, we note that none of the sets $A$, $B$ and $C$ is a dominating set of $G$. Let $u$ be a vertex of $G$ not dominated by the set $A$, and so $A \cap N[u] = \emptyset$. Since $B \cup C = V \setminus A$, we therefore have $N[u] \subseteq B \cup C$. Suppose that $B\cap N[u] = \emptyset$, implying that $N[u] \subseteq C$. By Theorem~\ref{thm:cmin3} and our assumption that $c_{\min}(G) \ne 2$, the set $N[u]$ is a dominating set. Since the property of being a dominating set is superhereditary, the set $C$ is therefore a dominating set of $G$, a contradiction. Hence, $B \cap N[u] \ne \emptyset$. Analogously, $C \cap N[u] \ne \emptyset$. Thus letting $P=B\cap N[u]$ and $Q=C\cap N[u]$, we have that $\{P,Q\}$ is a partition of $N[u]$. Hence by supposition, at least one of $P$ or $Q$ is a dominating set of $G$. This in turn implies that at least one of $B$ or $C$ is a dominating set of $G$, a contradiction. Hence, $c_{\min}(G) \ge 4$.~\QED
\end{proof}

\medskip
Let ${\cal F}$ be the family of graphs $G$ with vertex set $V = \{v,x,y\}\cup U$, where $\deg(v) = 2$ and $N(v)=\{x,y\}$, and where the vertices $x$ and $y$ are not adjacent but are both adjacent to all vertices of $U$ as illustrated in Figure~\ref{fs3}. Furthermore, the subgraph $G[U]$ induced by $U$ contains any number of edges, including the possibility of no edges. As an application of Theorem~\ref{thmc4}, we have the following result.

\begin{figure}[ht]
\begin{center}
	\includegraphics[width=0.25\linewidth]{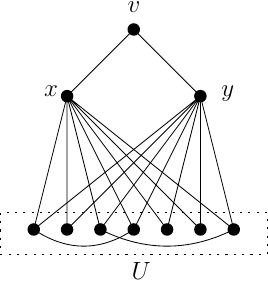}
	\caption{A graph in the family ${\cal F}$}
  \label{fs3}
	\end{center}
\end{figure}

\begin{theorem}
\label{thm:cmin4}
If $G$ is a graph with $\delta(G) = 2$ that does not contain a universal vertex, then $c_{\min}(G) = 4$ if and only if $G\in {\cal F}$.
\end{theorem}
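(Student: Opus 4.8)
The plan is to route everything through Theorem~\ref{thmc4}, and to pin down the exact value~$4$ (not merely $c_{\min}(G)\ge 4$) by exhibiting an explicit $c$-partition of order~$4$ and invoking Proposition~\ref{prop:upperb}. Throughout I would fix a vertex $v$ with $\deg(v)=2$ — one exists since $\delta(G)=2$ — write $N(v)=\{x,y\}$, and set $U=V\setminus\{v,x,y\}$.

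For the necessity direction, suppose $c_{\min}(G)=4$, so in particular $c_{\min}(G)\ge 4$. Applying Theorem~\ref{thmc4} to $v$ and the three $2$-partitions of $N[v]=\{v,x,y\}$, and using that $G$ has no universal vertex (so no singleton is dominating), I would deduce that $\{x,y\}$, $\{v,y\}$ and $\{v,x\}$ are all dominating sets. Since $N[v]=\{v,x,y\}$, the fact that $\{v,x\}$ dominates forces every vertex of $U$ to be adjacent to $x$, and symmetrically every vertex of $U$ is adjacent to $y$. The last point is to rule out the edge $xy$: if $x\sim y$, then $N[x]\supseteq\{v,x,y\}\cup U=V$, so $x$ would be a universal vertex, a contradiction. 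Hence $x\not\sim y$, and since $G[U]$ is unconstrained, $G$ has exactly the structure defining~${\cal F}$.

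For the sufficiency direction, suppose $G\in{\cal F}$ with $\delta(G)=2$. First I would note that $U\ne\emptyset$ (otherwise $\deg(x)=1$), that $n\ge 4$, and that $G$ has no universal vertex, so Theorem~\ref{thmc4} is available. To get $c_{\min}(G)\ge 4$ I would verify its criterion vertex by vertex: for $w=v$ it is immediate from $\{x,y\},\{v,x\},\{v,y\}$ being dominating; for $w=x$ (and symmetrically $w=y$), with $N[x]=\{v,x\}\cup U$, in any $2$-partition the part holding $v$ dominates once it also holds $x$, and otherwise the part holding $x$ dominates $v$, $x$ and all of $U$ through $x$ and reaches $y$ via any $U$-vertex it contains, the only degenerate case being the singleton $\{x\}$ opposite the set $\{v\}\cup U$, which dominates; for $w=u\in U$, with $N[u]=\{x,y\}\cup W$ where $W=\{u\}\cup(N(u)\cap U)\ne\emptyset$, the same analysis with $x,y$ playing the roles of $v,x$ applies, the degenerate case being a singleton $\{x\}$ (or $\{y\}$) opposite the dominating set $W\cup\{y\}$ (or $W\cup\{x\}$). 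Finally I would exhibit the $c$-partition $\{\{v\},\{x\},\{y\},U\}$ of order~$4$: $\{v\}$ and $\{x\}$ form a coalition (their union $\{v,x\}$ dominates), $\{y\}$ and $\{v\}$ form a coalition, and $U$ and $\{x\}$ form a coalition (their union $\{x\}\cup U$ dominates); Proposition~\ref{prop:upperb} then gives $c_{\min}(G)\le 4$, which with the previous bound yields $c_{\min}(G)=4$.

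I expect the main obstacle to be the sufficiency-direction case check that every member of~${\cal F}$ satisfies the Theorem~\ref{thmc4} criterion; the subtle points there are the degenerate sub-cases in which a part of a $2$-partition collapses to a singleton (such as $\{v\}$ or $\{x\}$), precisely where dominance could fail and where the hypothesis $U\ne\emptyset$ is used. Everything else is direct bookkeeping built on the already-established Theorem~\ref{thmc4} and Proposition~\ref{prop:upperb}.
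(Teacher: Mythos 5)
Your proof is correct, and while the necessity direction coincides with the paper's, the sufficiency direction takes a genuinely different route. For necessity, both you and the paper apply Theorem~\ref{thmc4} at the degree-$2$ vertex $v$ to conclude that $\{v,x\}$ and $\{v,y\}$ dominate (the singleton sides cannot, absent a universal vertex), read off that $x$ and $y$ are adjacent to all of $U$, and exclude the edge $xy$ because it would make $x$ universal. For sufficiency, the paper does \emph{not} re-invoke Theorem~\ref{thmc4}: it argues directly that a hypothetical $c_{\min}$-partition of order $2$ or $3$ is impossible, using the observation that any part containing $x$ together with any second vertex ($v$, $y$, or a vertex of $U$) is already a dominating set, which forces $\{x\}$ and $\{y\}$ to be singleton parts and leaves the dominating third part $\{v\}\cup U$, a contradiction. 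You instead verify the full criterion of Theorem~\ref{thmc4} --- for every vertex $w$ and every $2$-partition of $N[w]$, one side dominates --- treating $w=v$, $w\in\{x,y\}$, and $w\in U$ separately and isolating the degenerate singleton sub-cases, where $U\ne\emptyset$ (which you correctly derive from $\delta(G)=2$) is what saves the day. Your route is more uniform, letting the characterization theorem do double duty in both directions, at the price of a longer exhaustive case check; the paper's route is shorter but ad hoc. Both arguments close identically, exhibiting the $c$-partition $\{\{v\},\{x\},\{y\},U\}$ and applying Proposition~\ref{prop:upperb} to get $c_{\min}(G)\le 4$. The one place in your case check worth spelling out is $w\in U$ with the part $\{y\}\cup W$: you need every vertex of $W\subseteq U$ to be adjacent to $x$ so that $x$ is dominated, which indeed holds by the definition of ${\cal F}$.
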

\begin{proof}
Let $G$ be a graph with $\delta(G) = 2$ that does not contain a universal vertex. Let $v$ be a vertex of degree~$2$ in $G$, and let $N(v)=\{x,y\}$. Further, let $U = V \setminus N[v]$. Suppose firstly that $c_{\min}(G) = 4$. Let $P=\{v,x\}$ and $Q=\{y\}$. By Theorem \ref{thmc4}, at least one of $P$ and $Q$ is a dominating set of $G$. Since $G$ has no universal vertex, the set $Q$ is not a dominating set of $G$, implying that $P$ is necessarily a dominating set of $G$. This in turn implies that $x$ is adjacent to all vertices of $U$. Since $G$ has no universal vertex, the vertex $x$ is therefore not adjacent to the vertex~$y$. Interchanging the roles of $x$ and $y$, identical arguments show that the vertex $y$ is adjacent to all vertices of $U$. Hence, $G \in {\cal F}$.

Conversely, suppose that $G\in {\cal F}$. We adopt the notation used in the definition of the family ${\cal F}$, and so $V = \{v,x,y\}\cup U$, where $N_G(v) = \{x,y\}$. We show firstly that $c_{\min}(G) \ge 3$. Suppose, to the contrary, that $c_{\min}(G)=2$. Let ${\cal X} = \{A, B\}$ be a $c_{\min}$-partition of $G$. Since $G$ has no universal vertex, neither set $A$ nor $B$ is a dominating set of $G$. If $\{x,y\} \subseteq A$ or $\{x,y\}\subseteq B$, then since $\{x,y\}$ is a dominating set of $G$, at least one of $A$ or $B$ is a dominating set of $G$, a contradiction. Hence, $|A\cap\{x,y\}|=1$ and $|B\cap\{x,y\}|=1$. Renaming the sets $A$ and $B$ if necessary, we may assume that $x \in A$ and $y \in B$, and that $|A|\ge 2$. Let $w$ be a vertex in $A$ different from $x$. Thus either $w=v$ or $w\in U$. In both cases, the set $A$ is a dominating set of $G$, a contradiction. Hence, $c_{\min}(G) \ge 3$.

We show next that $c_{\min}(G) \ge 4$. Suppose, to the contrary, that $c_{\min}(G)=3$. Let ${\cal X} = \{A, B,C\}$ be a $c_{\min}$-partition of $G$. Since $G$ has no universal vertex, none of the sets $A$, $B$ or $C$ is a dominating set of $G$. We therefore infer that the set $\{x\}$ forms a singleton set in ${\cal X}$, as does the set $\{y\}$. Renaming the sets $A$, $B$ and $C$ we may assume that $A = \{x\}$ and $B = \{y\}$, and so $C = V \setminus \{x,y\}$. However, such a set $C$ is a dominating set of $G$, a contradiction. Hence, $c_{\min}(G) \ge 4$. The partition $\Psi = (\{v\},\{x\},\{y\},U)$ is a $c$-partition of $G$, implying by Proposition~\ref{prop:upperb} that $c_{\min}(G) \le 4$. Consequently,  $c_{\min}(G) = 4$.~\QED
\end{proof}

\section{Concluding remarks}

In this paper we address an open problem posed by Haynes, Hedetniemi, Hedetniemi, McRae, and Mohan~\cite{coal0} to study the minmin coalition number $c_{\min}(G)$ of a graph $G$. We show that if $G$ is a graph of order~$n$, then $2 \le c_{\min}(G)\le n$. Among other results, we characterized graphs $G$ of order $n$ satisfying $c_{\min}(G) = n$ and we provided polynomial time algorithm to determine if $c_{\min}(G) = 2$. It would be intriguing to investigate the open question of whether a polynomial time algorithm exists to determine larger values of $c_{\min}(G)$.

\medskip

\end{document}